\newtheorem{thm}{Theorem}[section]
\newtheorem{cor}[thm]{Corollary}
\newtheorem{lem}[thm]{Lemma}
\newtheorem{prob}[thm]{Problem}
\newtheorem*{decconj}{Decomposability Conjecture}
\theoremstyle{definition}
\newtheorem{defin}[thm]{Definition}
\newtheorem{exa}[thm]{Example}
\newtheorem*{claim}{Claim}
\newcommand{\lrangle}[1]{\langle #1 \rangle}
\newcommand{\res}{\upharpoonright}
\newcommand{\dg}[1]{\mathbf{#1}}
\newcommand{\borel}[2]{\mathbf{\Sigma}_{#1,#2}}
\newcommand{\cborel}[3]{\Sigma^{#1}_{#2,#3}}
\newcommand{\dec}[1]{\mathbf{dec}_{#1}}
\newcommand{\cdec}[2]{\mathrm{dec}^{#1}_{#2}}
\title[Decomposing Borel Functions]{Decomposing Borel Functions using the Shore-Slaman Join Theorem}
\author[T. Kihara]{Takayuki Kihara}
\address{School of Information Science\\ Japan Advanced Institute of Science and Technology\\
1-1 Asahidai, Nomi city, Ishikawa, 923-1292 Japan}
\email{kihara@jaist.ac.jp}
\date{}
\subjclass[2010]{Primary 03E15; Secondary 54H05}
\keywords{Baire function, Borel measurable function, Turing degree}
\begin{document}
\maketitle

\begin{abstract}
Jayne and Rogers proved that every function from an analytic space into a separable metrizable space is decomposable into countably many continuous functions with closed domains if and only if the preimage of each $F_\sigma$ set under it is again $F_\sigma$.
Many researchers conjectured that the Jayne-Rogers theorem can be generalized to all finite levels of Borel functions.
In this paper, by using the Shore-Slaman join theorem on the Turing degrees, we show the following variant of the Jayne-Rogers theorem at finite and transfinite levels of the hierarchy of Borel functions:
For all countable ordinals $\alpha$ and $\beta$ with $\alpha\leq\beta<\alpha\cdot 2$, every function between Polish spaces having small transfinite inductive dimension is decomposable into countably many Baire class $\gamma$ functions with $\mathbf{\Delta}^0_{\beta+1}$ domains such that $\gamma+\alpha\leq\beta$ if and only if, from each $\mathbf{\Sigma}^0_{\alpha+1}$ set, one can continuously find its $\mathbf{\Sigma}^0_{\beta+1}$ preimage.
\end{abstract}


\section{Summary}
\subsection{Introduction}

In the early 20th century, Nikolai Luzin asked whether every Borel function on the real line can be decomposed into countably many continuous functions.
The Luzin problem was negatively answered in the 1930s.
Then, {\em which Borel functions are decomposable into continuous functions?}
In the end of the 19th century, Baire introduced a well-known hierarchy of real functions by iterating pointwise limits of continuous functions.
A famous theorem by Lebesgue and Hausdorff states that every real function is of Baire class $\alpha$ if and only if the preimage of each open set under it is a Borel set of additive class $\alpha$, i.e., a $\mathbf{\Sigma}^0_{\alpha+1}$ set in the well-known Borel hierarchy.
One can introduce finer hierarchy of Borel functions than Baire's one.
For countable ordinals $\alpha,\beta<\omega_1$, a function is called {\em $\borel{\alpha}{\beta}$} if the preimage of each $\mathbf{\Sigma}^0_\alpha$ set under it is $\mathbf{\Sigma}^0_\beta$.
Then, where is the boundary of decomposability in this finer hierarchy of Borel functions?

A remarkable theorem proved by Jayne-Rogers \cite{JR} states that the $\borel{2}{2}$ functions are precisely the $\mathbf{\Delta}^0_2$-piecewise continuous functions, where for a class $\mathbf{\Gamma}$ of Borel sets and a class $\mathcal{F}$ of Borel functions, we say that a function is {\em $\mathbf{\Gamma}$-piecewise $\mathcal{F}$} (denoted by the symbol $\dec{\alpha}\mathcal{F}$ if $\mathbf{\Gamma}$ is a delta class $\mathbf{\Delta}^0_\alpha$) if it is decomposable into countably many $\mathcal{F}$-functions with $\mathbf{\Gamma}$ domains (see also \cite{KMS12} for an alternative proof).
Subsequently, Solecki \cite{Sole98} proved a dichotomy (see also \cite{MRpre,PawSab,Sabok09}) sharpening the Jayne-Rogers theorem by using the Gandy-Harrington topology from effective descriptive set theory.

More recently, a significant breakthrough was made by Semmes \cite{Sem}, who used Wadge-like infinite two-player games and {\em priority arguments} to show that on the zero-dimensional Polish space $\omega^\omega$, the $\borel{3}{3}$ functions are precisely the $\mathbf{\Delta}^0_3$-piecewise continuous functions, and the $\borel{2}{3}$ functions are precisely the $\mathbf{\Delta}^0_3$-piecewise $\mathbf{\Sigma}^0_2$-measurable (i.e., $\borel{1}{2}$) functions.
Countable decomposability at all finite levels of Borel hierarchy has been studied by Pawlikowski-Sabok \cite{PawSab} and Motto Ros \cite{MRpre}.
Naturally, many researchers expected that the Jayne-Rogers theorem and the Semmes theorem could be generalized to all finite levels of the hierarchy of Borel functions (see Andretta \cite{And07}, Semmes \cite{Sem}, Motto Ros \cite[Conjecture 1.6]{MRpre}, and Pawlikowski-Sabok \cite[Conjectures 7.1 and 7.2, and Question 7.3]{PawSab}).

\begin{decconj}
On separable metrizable spaces with analytic domain, the equality $\borel{m+1}{n+1}=\dec{n+1}\borel{1}{n-m+1}$ holds.
In other words, the $\borel{m+1}{n+1}$ functions are precisely the $\mathbf{\Delta}^0_{n+1}$-piecewise $\mathbf{\Sigma}^0_{n-m+1}$-measurable functions at all finite levels $m,n\in\omega$.
\end{decconj}

In this paper, we introduce the notion of the {\em $\borel{\alpha}{\beta}^{\to}$ functions}, which are a special subclass of the $\borel{\alpha}{\beta}$ functions.
Roughly speaking, a function is said to be $\borel{\alpha}{\beta}^\to$ if a continuous function witnesses that it is $\borel{\alpha}{\beta}$, that is, a continuous function maps each code of a $\mathbf{\Sigma}^0_{\alpha}$ set to a code of its $\mathbf{\Sigma}^0_{\beta}$ preimage (For precise definition, see Definition \ref{def:contBorel}).
One can also realize this notion by introducing lightface (i.e., {\em computable}) versions of the $\borel{\alpha}{\beta}$ functions and relativizing them by oracles.

Here, we should emphasize the significance of the concept of decomposability in computability theory and computer science.
As typical examples from computational complexity theory, nonuniform complexity classes are usually defined as classes of problems that are feasibly solvable with advice strings, that is, classes of problems solved by piecewise feasible functions.
For several applications of nonuniform computability on countably-based topological spaces, see \cite{BG,Zie4}.
Moreover, it is important to note that a certain type of computational learning process (such as the identification in the limit) can be captured as $\mathbf{\Delta}^0_2$-piecewise continuity \cite{BMP,dBre13,HiKi,HiKi2}.
Further, as a type of piecewise continuity, the concept of layerwise computability based on Luzin's theorem in measure theory is playing a greater role in the algorithmic randomness theory and effective probability theory \cite{HoyRoj09,Miy}.

Our main theorem states that for all countable ordinals $\alpha$ and $\beta$, every $\borel{\alpha+1}{\beta+1}^\to$ function between Polish spaces having small transfinite inductive dimension is decomposable into a countable list $\{F_n\}_{n\in\omega}$ of functions such that each $F_n$ is $\mathbf{\Sigma}^0_{\gamma+1}$-measurable for some ordinal $\gamma$ with $\gamma+\alpha\leq\beta$.
Furthermore, if $\alpha\leq\beta<\alpha\cdot 2$, a function between Polish spaces having small transfinite inductive dimension is $\borel{\alpha+1}{\beta+1}^\to$ if and only if it is decomposable into such a list $\{F_n\}_{n\in\omega}$ where ${\rm dom}(F_n)$ is $\mathbf{\Delta}^0_{\beta+1}$.
This can be considered as a partial solution to the decomposability conjecture.
To achieve our objective, we employ the Shore-Slaman join theorem on the Turing degrees to show the lightface (i.e., computable) version of our main theorem, and then, we obtain the boldface theorem by relativizing it.

\subsection{Preliminaries}

For the basic concepts of computable analysis, see Weihrauch \cite{Wei}, and for (effective) descriptive set theory, see Kechris \cite{Kec95} and Moschovakis \cite{MosDS}.

The set of all natural numbers is denoted by $\omega$.
The notation $f:\subseteq X\to Y$ denotes that $f$ is a partial function from $X$ into $Y$.
For any reals $X,Y\in\omega^\omega$, the symbol $X\leq_TY$ denotes that $X$ is Turing reducible to $Y$; $X\oplus Y$ denotes the real $Z$ with $Z(2n)=X(n)$ and $Z(2n+1)=Y(n)$.
Given $X\in\omega^\omega$ and $e,n,m\in\omega$, the notation $\Phi_e(X;n)=m$ denotes that the $e$-th Turing machine with input $n$ and oracle $X$ halts and outputs the value $m$.
As usual, we sometimes think of each Turing machine $\Phi_e$ as a partial function from $\omega^\omega$ into $\omega^\omega$, where its domain ${\rm dom}(\Phi_e)$ is the set of all oracles $X$ such that $\Phi_e(X;n)$ is defined for all $n\in\omega$.
Let $X'$ denote the Turing jump of $X$, that is, the halting problem relative to $X$, and let $X^{(\alpha)}$ denote the $\alpha$-th iterated Turing jump of $X$ for every computable ordinal $\alpha$.

Let $\omega^\omega$ denote the Baire space of infinite sequences of natural numbers, that is, the topological product of countably many discrete spaces $\omega$.
Each Borel set is frequently identified with a so-called {\em Borel code} in the fields of (descriptive) set theory.
We only require a coding $B^\alpha:x\mapsto B_x^\alpha$ of $\mathbf{\Sigma}^0_\alpha$ subsets of a given space $\mathcal{X}$ to fulfill the following conditions.
\begin{enumerate}
\item (Total surjectivity) $B^\alpha:\omega^\omega\to\mathbf{\Sigma}^0_\alpha(\mathcal{X})$ is total and surjective.
\item (Measurability) $\{\lrangle{x,y}\in\omega^\omega\times\mathcal{X}:y\in B^\alpha_x\}$ is $\mathbf{\Sigma}^0_\alpha$.
\end{enumerate}

The usual Borel coding restricted to $\mathbf{\Sigma}^0_\alpha$ sets satisfies the above two conditions (see \cite{Bra1,MosDS}).
Hereafter, we fix a Borel coding $B^\alpha$ satisfying (1) and (2), and then, identify each Borel set $B^\alpha_x$ with its code $x\in\omega^\omega$.
For instance, we say that a function $F:\mathbf{\Sigma}^0_\alpha(\mathcal{X})\to\mathbf{\Sigma}^0_\beta(\mathcal{Y})$ is continuous if there is a continuous function $f:\omega^\omega\to\omega^\omega$ such that $F(B^\alpha_x)=B^\beta_{f(x)}$ for every $x\in\omega^\omega$.
Then, the condition (2) can be rephrased as follows.
\begin{enumerate}
\item[(2')]
The membership relation $\in_\alpha:\mathcal{X}\times\mathbf{\Sigma}^0_\alpha(\mathcal{X})\to\mathbb{S}$ is $\mathbf{\Sigma}^{0}_{\alpha}$-measurable,
\end{enumerate}
where $\in_\alpha(x,A)$ is the truth value of $x\in A$, and $\mathbb{S}=\{0,1\}$ is Sierpi\'nski's connected two-point space whose open sets are $\emptyset$, $\{1\}$, and $\{0,1\}$.

Hereafter, by a {\em represented space}, we mean a recursively presented Polish space (see \cite{MosDS}; or, more generally, the reader may take a represented space in this paper to mean an {\em admissibly} represented space in the sense of \cite{Wei}, that is, a $T_0$ quotient of a second-countable space endowed with the notion of computability \cite{Schro}).

\begin{defin}\label{def:contBorel}
Let $X\in 2^\omega$ be a real, let $\alpha,\beta<\omega_1^X$ be ordinals, and let $\mathcal{X}$ and $\mathcal{Y}$ be represented spaces.
A function $F:\mathcal{X}\to\mathcal{Y}$ is $\borel{\alpha}{\beta}^\to$ (respectively, $\cborel{X}{\alpha}{\beta}$) if it is $\borel{\alpha}{\beta}$, and the function $F^{-1}:\mathbf{\Sigma}^0_\alpha(\mathcal{Y})\to\mathbf{\Sigma}^0_\beta(\mathcal{X})$ sending each $\mathbf{\Sigma}^0_\alpha$ set $S\subseteq\mathcal{Y}$ to its preimage $F^{-1}(S)\subseteq\mathcal{X}$ is continuous (respectively, $X$-computable).
\end{defin}

To emphasize its domain and range, we sometimes write $F\in\borel{\alpha}{\beta}^\to(\mathcal{X},\mathcal{Y})$ (respectively, $\cborel{X}{\alpha}{\beta}(\mathcal{X},\mathcal{Y})$).
The inclusion $\borel{\alpha}{\beta}^\to\subseteq\borel{\alpha+\gamma}{\beta+\gamma}^\to$ holds for all ordinals $\alpha,\beta,\gamma<\omega_1$.
A $\borel{1}{\alpha}$ function and a $\cborel{X}{1}{\alpha}$ function are often called a $\mathbf{\Sigma}^0_\alpha$-measurable function and a $\Sigma^{0,X}_\alpha$-computable function, respectively.
The effective hierarchy of Borel functions at finite levels has been studied by Brattka \cite{Bra1}.
Pauly and de Brecht \cite{PBreta} have also studied the {\em Markov-effectivization} of $\borel{2}{2}$ in the sense that $F^{-1}:\Sigma^0_2(\mathcal{Y})\to\Sigma^0_2(\mathcal{X})$ is computable.

\begin{defin}
Let $\mathcal{F}$ be a class of partial functions from a represented space $\mathcal{X}$ into a represented space $\mathcal{Y}$.
\begin{enumerate}
\item 
A function $F:\mathcal{X}\to \mathcal{Y}$ is {\em countably $\mathcal{F}$} (denoted by $\mathbf{dec}\mathcal{F}$) if there is a countable partition $\{Q_i\}_{i\in\omega}$ of $\mathcal{X}$ such that $F\res Q_i\in\mathcal{F}$ for each $i\in\omega$.
Moreover, if each piece $Q_i$ can be chosen as a $\mathbf{\Delta}^0_\alpha$ set, then $F$ is said to be {\em $\mathbf{\Delta}^{0}_{\alpha}$-piecewise $\mathcal{F}$} (denoted by $\dec{\alpha}\mathcal{F}$).
\item
A function $F:\mathcal{X}\to \mathcal{Y}$ is {\em countably $\Sigma^{0,X}_\beta$-computable} (denoted by $\mathrm{dec}\cborel{X}{1}{\beta}$ if there is a countable partition $\{Q_i\}_{i\in\omega}$ of $\mathcal{X}$ such that $F\res Q_i$ is $\Sigma^{0,X}_\beta$-computable uniformly in $i\in\omega$.
Moreover, if $\{Q_i\}_{i\in\omega}$ is uniformly $\Delta^{0,Y}_\alpha$, then $F$ is said to be {\em $\Delta^{0,Y}_\alpha$-piecewise $\Sigma^{0,X}_\beta$-computable} (denoted by $\cdec{Y}{\alpha}\cborel{X}{1}{\beta}$).
\end{enumerate}
\end{defin}

The $\borel{2}{2}^\to$ functions have been studied by Pauly-de Brecht \cite{PBreta}, who showed that two equalities $\cborel{}{2}{2}=\cdec{}{2}\cborel{}{1}{1}$ and $\borel{2}{2}^\to=\dec{2}\borel{1}{1}$ hold.

\begin{exa}
\begin{enumerate}
\item $\cborel{}{1}{\alpha+1}(2^\omega)\not\subseteq\mathbf{dec}\borel{1}{\alpha}(2^\omega)$ holds for each $\alpha<\omega_1^{CK}$.
Indeed, the $\alpha$-th Turing jump $J^{(\alpha)}:2^\omega\to 2^\omega$ is $\Sigma^0_{\alpha+1}$-computable, but is not countably $\mathbf{\Sigma}^0_\alpha$-measurable.
\item Let $\chi_\mathbb{Q}:\mathbb{R}\to 2$ be Dirichlet's nowhere continuous function.
Then, $\chi_\mathbb{Q}\in\cborel{}{3}{3}\cap\cdec{}{3}\cborel{}{1}{1}$, but $\chi_\mathbb{Q}\not\in\borel{1}{2}$.
\end{enumerate}
\end{exa}

\subsection{Main Theorem}

By $\beta\hat{-}\alpha$, we denote the smallest ordinal $\delta$ with $\delta+\alpha>\beta$.
Note that $n\hat{-}m=n-m+1$ for any natural numbers $m\leq n\in\omega$.
To present our main theorem, we use the notation $\borel{1}{(\beta\hat{-}\alpha)}=\bigcup_{\gamma<\beta\hat{-}\alpha}\borel{1}{\gamma+1}$.

\begin{thm}\label{maintheorem0}
Let $\mathcal{X}$ and $\mathcal{Y}$ be Polish spaces having small transfinite inductive dimensions, and let $\alpha\leq\beta<\omega_1$ be countable ordinals.
Then, we have the following inclusions.
\[\dec{\beta+1}\borel{1}{(\beta\hat{-}\alpha)}(\mathcal{X},\mathcal{Y})\subseteq\borel{\alpha+1}{\beta+1}^\to(\mathcal{X},\mathcal{Y})\subseteq\mathbf{dec}\borel{1}{(\beta\hat{-}\alpha)}(\mathcal{X},\mathcal{Y}).\]
\end{thm}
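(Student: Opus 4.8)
The plan is to prove the two inclusions separately, with the second (the decomposition direction) being the substantive one. For the easy inclusion $\dec{\beta+1}\borel{1}{(\beta\hat{-}\alpha)}(\mathcal{X},\mathcal{Y})\subseteq\borel{\alpha+1}{\beta+1}^\to(\mathcal{X},\mathcal{Y})$, I would argue as follows. Suppose $F$ is partitioned into pieces $\{Q_n\}_{n\in\omega}$ with each $Q_n$ a $\mathbf{\Delta}^0_{\beta+1}$ set and $F\res Q_n\in\borel{1}{\gamma(n)+1}$ for some $\gamma(n)<\beta\hat{-}\alpha$, i.e.\ $\gamma(n)+\alpha\leq\beta$. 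By Remark~\ref{remark1}, each $F\res Q_n$ is $\mathbf{\Sigma}^0_{\gamma(n)+1}$-measurable, hence lies in $\borel{1}{\gamma(n)+1}^\to$, and since $\borel{\mu}{\nu}^\to\subseteq\borel{\mu+\delta}{\nu+\delta}^\to$, taking $\delta=\alpha$ gives $F\res Q_n\in\borel{1+\alpha}{\gamma(n)+1+\alpha}^\to\subseteq\borel{\alpha+1}{\beta+1}^\to$ (using $1+\alpha=\alpha+1$ for $\alpha\geq 1$, and the monotonicity $\gamma(n)+1+\alpha\leq\beta+1$). Then I would assemble a continuous map witnessing $F^{-1}$ on $\mathbf{\Sigma}^0_{\alpha+1}$-codes: given a code for a $\mathbf{\Sigma}^0_{\alpha+1}$ set $S\subseteq\mathcal{Y}$, compute codes for each $(F\res Q_n)^{-1}(S)$, intersect each with the $\mathbf{\Delta}^0_{\beta+1}$ code for $Q_n$, and take the countable union; since a countable union of $\mathbf{\Sigma}^0_{\beta+1}$ sets is $\mathbf{\Sigma}^0_{\beta+1}$ uniformly, and $F^{-1}(S)=\bigcup_n ((F\res Q_n)^{-1}(S)\cap Q_n)$, this is the required continuous witness. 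Here I must check that the $\dec{\beta+1}$ structure can be taken lightface relative to a single oracle and that all the code manipulations are uniform — routine but needing care with the representations $\rho_W$.

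For the hard inclusion $\borel{\alpha+1}{\beta+1}^\to(\mathcal{X},\mathcal{Y})\subseteq\mathbf{dec}\borel{1}{(\beta\hat{-}\alpha)}(\mathcal{X},\mathcal{Y})$, the strategy is to reduce to the lightface (computable) statement and then relativize. First I would use Remark~\ref{remark3} and the level $3/2$ isomorphisms to transfer the problem to $\mathcal{X}=\mathcal{Y}=\omega^\omega$: a $3/2$-isomorphism $h$ sends $\mathbf{\Sigma}^0_\alpha$ sets to $\mathbf{\Sigma}^0_{\alpha+?}$ sets in a controlled way, and since $\alpha\geq 1$ the level shift introduced by $h,h^{-1}$ (which cost levels $2$ and $3$) is absorbed into the "$+1$" slack and into the range $\alpha\leq\beta<\alpha\cdot 2$; I would verify $F\in\borel{\alpha+1}{\beta+1}^\to$ is preserved, possibly after passing to an oracle, and that decomposability transfers back (a $\mathbf{\Delta}^0_{\gamma+1}$-piece structure pulls back to a $\mathbf{\Delta}^0$-piece structure of controlled level). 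Then, on Baire space, fix an oracle $X$ relative to which $F$ is $\cborel{X}{\alpha+1}{\beta+1}$-computable. The core combinatorial/recursion-theoretic step is: for $x\in\omega^\omega$, one wants to find $\gamma(x)<\beta\hat{-}\alpha$ such that $F$ restricted to a suitable $\mathbf{\Delta}^0$-neighborhood piece of $x$ is $\mathbf{\Sigma}^0_{\gamma(x)+1}$-measurable, and one wants only countably many distinct pieces. This is where the Shore–Slaman join theorem enters: given that $F^{-1}$ uniformly maps $\mathbf{\Sigma}^0_{\alpha+1}$-codes to $\mathbf{\Sigma}^0_{\beta+1}$-codes, the Turing-degree-theoretic content is that $(F(x))^{(\alpha)}$ is computable from $x^{(\beta)}$ uniformly; applying Shore–Slaman (in the form that lets one "invert" jump inequalities via joins with a parameter) yields, for each $x$, a real $g(x)$ coding a $\mathbf{\Sigma}^0$-witness of relatively low level — this produces the ordinal $\gamma(x)$ with $\gamma(x)+\alpha\leq\beta$.

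The key steps, in order: (1) the easy inclusion via code manipulation as above; (2) reduce the hard inclusion to $\omega^\omega$ using $3/2$-isomorphisms, tracking level shifts and oracles; (3) state and prove the lightface claim on $\omega^\omega$ — every $\cborel{X}{\alpha+1}{\beta+1}$-computable $F$ is $\mathrm{dec}\,\cborel{X'}{1}{(\beta\hat{-}\alpha)}$ for a suitable oracle — by the Shore–Slaman argument, using effective descriptive set theory (Gandy–Harrington-style basis/selection) to get the countable partition into pieces on which the local Borel rank drops to $\gamma(n)+1$ with $\gamma(n)+\alpha\leq\beta$; (4) relativize and transfer back along $h$. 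The main obstacle is step~(3): making the Shore–Slaman join theorem actually deliver a \emph{uniform} assignment $x\mapsto(\text{piece},\gamma)$ with only countably many pieces and with the pieces being genuinely $\mathbf{\Delta}^0$ of bounded rank — the join theorem is a statement about individual degrees, so the work is in stitching its outputs into a Borel (indeed $\mathbf{\Delta}^0_{\beta+1}$) partition, which forces a careful effective-forcing or ranked-forcing construction and is presumably the technical heart of the paper. A secondary subtlety is the dimension hypothesis: it is needed precisely to invoke Remark~\ref{remark3} in step~(2), and one must confirm that $\mathbf{\Pi}^0_\beta$ (equivalently $\mathbf{\Delta}^0_{\beta+1}$) domains survive the transfer, which is where the constraint $\beta<\alpha\cdot 2$ is genuinely used.
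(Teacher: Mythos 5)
Your architecture matches the paper's: the easy inclusion is exactly the relativization of Lemma \ref{lem:mottoros} (your code-manipulation argument is that proof, and the single-oracle issue you flag is handled by Lemma \ref{prop:piececonlib}), and the hard inclusion goes through $3/2$-isomorphisms to reduce to $\omega^\omega$, fixes an oracle $X$ with $F\in\cborel{X}{\alpha+1}{\beta+1}$, and runs a Shore--Slaman degree analysis. The one genuine gap is the step you yourself single out as the ``main obstacle'': you leave unresolved how to stitch the pointwise outputs of the join theorem into a uniform, $\mathbf{\Delta}^0_{\beta+1}$-definable partition, and you speculate that this requires an effective-forcing construction. It does not, because the right-hand class here is $\mathbf{dec}\borel{1}{(\beta\hat{-}\alpha)}$, a countable partition with \emph{no} definability constraint on the pieces. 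Once the Shore--Slaman argument (used purely pointwise, in a proof by contradiction as in Lemma \ref{lem:5:meas-to-nonunif}) yields, for every $x$, some $\gamma<\beta\hat{-}\alpha$ with $F(x)\leq_T(x\oplus X)^{(\gamma)}$, the decomposition is immediate by indexing over Turing machines: the countably many sets $Q_{e,\gamma}=\{x:\Phi_e((x\oplus X)^{(\gamma)})=F(x)\}$, with $e\in\omega$ and $\gamma$ ranging over (notations for) ordinals below $\beta\hat{-}\alpha$, cover the domain, and $F\res Q_{e,\gamma}$ is $\Sigma^{0,X}_{\gamma+1}$-computable (Lemma \ref{lem:5:relative}). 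No uniformity in $x$ is needed beyond what the hypothesis $F^{-1}\Sigma^{0,X}_{\alpha+1}\subseteq^\omega\Sigma^{0,X}_{\beta+1}$ already provides.

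Two further corrections. First, the constraint $\beta<\alpha\cdot 2$ plays no role in this theorem, contrary to your closing remark; it enters only in Theorem \ref{maintheorem}, where the pieces must be made $\mathbf{\Delta}^0_{\beta+1}$ and one needs $\beta\hat{-}\alpha\leq\alpha$ in the extension-lemma argument. Second, the $3/2$-isomorphism transfer requires $\beta\geq 2$ (the inverse isomorphism costs $\mathbf{\Delta}^0_3$ pieces and hence lands in $\borel{\beta+1}{\beta+1}^\to$ only when $\beta\geq 2$), so the case $\alpha=\beta=1$ must be dispatched separately by the Jayne--Rogers theorem, which your sketch omits; the case $\alpha=0$ is trivial.
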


\begin{thm}\label{maintheorem}
Let $\mathcal{X}$ and $\mathcal{Y}$ be Polish spaces having small transfinite inductive dimensions.
For any countable ordinals $\alpha,\beta<\omega_1$ with $\alpha\leq\beta<\alpha\cdot 2$, we have the following equality.
\[\borel{\alpha+1}{\beta+1}^\to(\mathcal{X},\mathcal{Y})=\dec{\beta+1}\borel{1}{(\beta\hat{-}\alpha)}(\mathcal{X},\mathcal{Y}).\]
\end{thm}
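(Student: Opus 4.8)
The plan is to derive Theorem \ref{maintheorem} from Theorem \ref{maintheorem0} by showing that, under the additional hypothesis $\beta<\alpha\cdot 2$, the two outer classes in the displayed chain of inclusions coincide, so that the middle class $\borel{\alpha+1}{\beta+1}^\to$ is squeezed between them. Concretely, Theorem \ref{maintheorem0} already gives
\[\dec{\beta+1}\borel{1}{(\beta\hat{-}\alpha)}(\mathcal{X},\mathcal{Y})\subseteq\borel{\alpha+1}{\beta+1}^\to(\mathcal{X},\mathcal{Y})\subseteq\mathbf{dec}\borel{1}{(\beta\hat{-}\alpha)}(\mathcal{X},\mathcal{Y}),\]
so the only thing to prove is the reverse inclusion
\[\mathbf{dec}\borel{1}{(\beta\hat{-}\alpha)}(\mathcal{X},\mathcal{Y})\subseteq\dec{\beta+1}\borel{1}{(\beta\hat{-}\alpha)}(\mathcal{X},\mathcal{Y}),\]
i.e.\ that any countable partition witnessing membership in the right-hand class can be refined to one whose pieces are $\mathbf{\Delta}^0_{\beta+1}$. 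Since trivially $\dec{\beta+1}\mathcal{F}\subseteq\mathbf{dec}\,\mathcal{F}$ for any $\mathcal{F}$, this gives the equality.

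The key point is a complexity bound on the domains. Suppose $F=\bigsqcup_i F\res Q_i$ with each $F\res Q_i\in\borel{1}{\gamma(i)+1}$ and $\gamma(i)<\beta\hat{-}\alpha$, i.e.\ $\gamma(i)+\alpha\leq\beta$. First I would reduce to the zero-dimensional case: using the level $3/2$ Borel isomorphisms from Remark \ref{remark3}, transfer $F$ along bijections $\mathcal{X}\cong\omega^\omega\cong\mathcal{Y}$, noting that composing with a $\dec{2}\borel{1}{1}$ or $\dec{3}\borel{1}{1}$ map perturbs the Borel rank of preimages by at most a small finite amount that is absorbed because $\beta\geq\alpha\geq 1$ forces $\beta+1\geq 3$; one has to check that the $\borel{\alpha+1}{\beta+1}^\to$ property and the decomposition property are both preserved, which is the routine part. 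Then, on $\omega^\omega$, the Jayne--Rogers/Semmes-type machinery (or directly the argument behind Theorem \ref{maintheorem0}, applied with the effective, i.e.\ lightface, hypothesis and relativized) produces a witnessing partition; the Shore--Slaman join theorem is what bounds the Turing complexity of the pieces, yielding that each $Q_i$ is $\mathbf{\Delta}^0_{\gamma(i)+\alpha+1}$. Here the hypothesis $\beta<\alpha\cdot 2$ enters decisively: it guarantees $\gamma(i)<\alpha$ for every $i$, hence $\gamma(i)+\alpha<\alpha\cdot 2\leq\beta+\alpha$ — more to the point, $\gamma(i)+\alpha\leq\beta$ gives $\gamma(i)+\alpha+1\leq\beta+1$, so $Q_i$ is automatically $\mathbf{\Delta}^0_{\beta+1}$, which is exactly the refinement we need. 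Without $\beta<\alpha\cdot 2$ the ordinal arithmetic $\gamma(i)+\alpha$ need not stay below $\beta+1$ (ordinal addition is not commutative), and the argument breaks — this is precisely why the general decomposability conjecture is not settled here.

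The main obstacle I anticipate is the transfer step combined with making the complexity-of-pieces bound genuinely uniform and robust under the dimension-reduction isomorphisms: one must verify that after conjugating $F$ by the $3/2$-isomorphisms, the resulting map on $\omega^\omega$ is still $\borel{\alpha+1}{\beta+1}^\to$ (the continuity of the preimage operation must survive composition with the piecewise-continuous isomorphisms, which requires gluing together the continuous witnesses on each piece and checking the glued map is still continuous on Borel codes), and that pulling the $\mathbf{\Delta}^0_{\beta+1}$ pieces back through a $\dec{2}\borel{1}{1}$ map keeps them $\mathbf{\Delta}^0_{\beta+1}$ — this uses $\beta+1\geq 3$. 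Once these bookkeeping facts are in place, the heart of the matter is the lightface statement proved via Shore--Slaman, and the boldface theorem follows by relativization as indicated in the introduction.
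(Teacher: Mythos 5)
There is a genuine gap at the heart of your argument: the step that upgrades the countable decomposition to a $\mathbf{\Delta}^0_{\beta+1}$-piecewise one. You attribute the bound ``each $Q_i$ is $\mathbf{\Delta}^0_{\gamma(i)+\alpha+1}$'' to the Shore--Slaman join theorem, but that theorem does no such work. In the paper, Shore--Slaman (via Lemmas \ref{lem:5:meas-to-nonunif} and \ref{lem:5:relative}) yields only the pointwise degree bound $F(x)\leq_T(x\oplus X)^{(\gamma)}$ and hence a countable cover by pieces of the form $Q_e=\{x:\Phi_e((x\oplus X)^{(\gamma)})=F(x)\}$; these sets are defined in terms of $F$ itself and carry no a priori Borel complexity bound whatsoever. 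The definability of the pieces is established by an entirely separate argument (Lemma \ref{lem:main_declemma}, preceded by the Effective Extension Lemma \ref{lem:ext_lem}): one replaces each piece function $F_n$ by an extension with $\Pi^0_{\gamma+2}$ domain and $\Pi^0_{\gamma+1}$ graph $G_n$, observes that $\beta<\alpha\cdot 2$ forces $\beta\hat{-}\alpha\leq\alpha$ so that $G_n\in\Pi^0_{\gamma+1}\subseteq\Sigma^0_{\alpha+1}$, and then applies the $\borel{\alpha+1}{\beta+1}^\to$ hypothesis on $F$ to the vertical sections $G_n^{[x]}$ to conclude that $Q_n=\{x\in{\rm dom}(F_n):F(x)=F_n(x)\}$ is uniformly $\mathbf{\Sigma}^0_{\beta+1}$, hence a countable union of $\mathbf{\Pi}^0_\beta$ (so $\mathbf{\Delta}^0_{\beta+1}$) pieces. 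Your proposal contains no mechanism that could produce this bound.

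Relatedly, your diagnosis of where $\beta<\alpha\cdot 2$ enters is wrong, and this is a symptom of the same gap. The inequality $\gamma(i)+\alpha\leq\beta$ that you invoke is just the definition of $\gamma(i)<\beta\hat{-}\alpha$ and holds for all $\alpha\leq\beta$; if your route worked, the hypothesis $\beta<\alpha\cdot 2$ would be vacuous and you would have proved the full ($\to$-version of the) decomposability conjecture, which the paper explicitly does not claim. The true role of $\beta<\alpha\cdot 2$ is to make the graphs of the piece functions simple enough ($\Sigma^0_{\alpha+1}$) that the continuous-preimage hypothesis on $F$ applies to them. Note also that your reduction ``it suffices to show $\mathbf{dec}\borel{1}{(\beta\hat{-}\alpha)}\subseteq\dec{\beta+1}\borel{1}{(\beta\hat{-}\alpha)}$'' sets up a false target: that bare inclusion fails (a countable cover by arbitrary pieces cannot be refined to a Borel one in general); what is actually proved is the inclusion of the \emph{intersection} $\borel{\alpha+1}{\beta+1}^\to\cap\mathbf{dec}\borel{1}{(\beta\hat{-}\alpha)}$ into $\dec{\beta+1}\borel{1}{(\beta\hat{-}\alpha)}$, with the $\borel{\alpha+1}{\beta+1}^\to$ hypothesis doing essential work. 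The surrounding architecture of your proposal (reduce to $\omega^\omega$ by $3/2$-isomorphisms, prove the lightface statement, relativize via Lemmas \ref{lem:relatiI} and \ref{prop:piececonlib}) matches the paper, but the central lemma is missing.
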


Hence, as a corollary we can see that the class $\borel{m+1}{n+1}^\to$ is precisely the class of $\mathbf{\Delta}^0_{n+1}$-piecewise $\mathbf{\Sigma}^0_{n-m+1}$-measurable functions (compare with the decomposability conjecture).
Moreover, if $\alpha\geq\omega$, the assumption of transfinite-dimensionality can be removed from Theorems \ref{maintheorem0} and \ref{maintheorem}.


\section{Proof of Main Theorem}
\subsection{Boldface versus Lightface}

Hereafter, we deal with spaces endowed with the notion of computability which fulfills the fundamental relativization principle that ``continuity is equal to computability relative to an oracle''.
For instance, any represented space in our sense (that is, any recursively presented Polish space, or more generally, any admissibly represented space) satisfies this principle.
It clearly implies the equality $\mathbf{\Sigma}_{1,\alpha}=\borel{1}{\alpha}^\to$ (see also \cite{BrechtY09}), whereas it is still open whether $\mathbf{\Sigma}_{\alpha,\beta}=\borel{\alpha}{\beta}^\to$, in general (see Problem \ref{prob:conttrans}).
The relativization principle also implies the following relativization lemmas for $\borel{\alpha}{\beta}^\to$ and $\dec{\beta}\borel{1}{\alpha}$.

\begin{lem}[Relativization I]\label{lem:relatiI}
Let $\mathcal{X}$ and $\mathcal{Y}$ be represented spaces, and let $\alpha,\beta<\omega_1$ be countable ordinals.
A function $F:\mathcal{X}\to\mathcal{Y}$ is $\borel{\alpha}{\beta}^\to$ if and only if it is $\cborel{X}{\alpha}{\beta}$ for some $X\in 2^\omega$ with $\alpha,\beta<\omega_1^X$.
\qed
\end{lem}

\begin{lem}[Relativization II]\label{prop:piececonlib}
Let $\mathcal{X}$ and $\mathcal{Y}$ be represented spaces, and let $\alpha,\beta<\omega_1$ be a countable ordinal.
A function $F:\mathcal{X}\to\mathcal{Y}$ is $\dec{\beta}\borel{1}{\alpha}$ if and only if it is $\cdec{X}{\beta}\cborel{X}{1}{\alpha}$ for some $X\in 2^\omega$ with $\alpha,\beta<\omega_1^X$.
\qed
\end{lem}


The inclusion $\dec{\beta+1}\borel{1}{(\beta\hat{-}\alpha)}\subseteq\borel{\alpha+1}{\beta+1}^\to$ in Theorem \ref{maintheorem0} can be easily shown by relativizing the following lemma.

\begin{lem}\label{lem:mottoros}
Let $\mathcal{X}$ and $\mathcal{Y}$ be represented spaces.
Fix an oracles $X$, and ordinals $\alpha\leq\beta<\omega_1^X$.
\[\cdec{X}{\beta+1}\cborel{X}{1}{(\beta\hat{-}\alpha)}(\mathcal{X},\mathcal{Y})\subseteq\cborel{X}{\alpha+1}{\beta+1}(\mathcal{X},\mathcal{Y}).\]
\end{lem}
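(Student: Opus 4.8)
The plan is to unwind the definitions and reduce the inclusion to the shift recorded in Remark~\ref{remark1} together with the standard closure properties of the Borel hierarchy. Since $\alpha\leq\beta<\omega_1^X$, every ordinal that occurs below is bounded by $\beta+1<\omega_1^X$, so all the well-founded trees and $\mathcal{O}^X$-notations one needs are available $X$-computably and uniformly; this is what makes the code translations below effective, and it is the one point requiring a little care.

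First I would fix $F\in\cdec{X}{\beta+1}\cborel{X}{1}{(\beta\hat{-}\alpha)}(\mathcal{X},\mathcal{Y})$, witnessed by a partition $\{Q_i\}_{i\in\omega}$ of $\mathcal{X}$ that is uniformly $\Delta^{0,X}_{\beta+1}$ together with an $X$-computable sequence $\langle\gamma_i\rangle_{i\in\omega}$ of ordinals $\gamma_i<\beta\hat{-}\alpha$ such that $g_i:=F\res Q_i$ is $\cborel{X}{1}{\gamma_i+1}$ uniformly in $i$. By definition of $\beta\hat{-}\alpha$ (and monotonicity of $\xi\mapsto\xi+\alpha$) one has $\gamma_i+\alpha\leq\beta$, hence $\gamma_i+\alpha+1\leq\beta+1$, for every $i$. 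The crux is then to observe that $g_i\in\cborel{X}{\alpha+1}{\beta+1}$ uniformly in $i$: the effective, relativized form of the shift $\borel{\mu_1}{\mu_2}^{\to}\subseteq\borel{\mu_1+\mu}{\mu_2+\mu}^{\to}$ of Remark~\ref{remark1} gives $\cborel{X}{1}{\gamma_i+1}\subseteq\cborel{X}{1+\mu}{(\gamma_i+1)+\mu}$ for every $\mu$, and choosing $\mu=\alpha$ when $\alpha<\omega$ and $\mu=\alpha+1$ when $\alpha\geq\omega$ (so that $1+\mu=\alpha+1$ and $(\gamma_i+1)+\mu=\gamma_i+\alpha+1$ in either case) this becomes $\cborel{X}{1}{\gamma_i+1}\subseteq\cborel{X}{\alpha+1}{\gamma_i+\alpha+1}\subseteq\cborel{X}{\alpha+1}{\beta+1}$, the last inclusion by $\gamma_i+\alpha+1\leq\beta+1$ and monotonicity of $\cborel{X}{\alpha+1}{\cdot}$.

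With this in hand I would assemble the pieces. Given a $\mathbf{\Sigma}^0_{\alpha+1}(\mathcal{Y})$-code of a set $S$, one produces, $X$-computably and uniformly in $i$, a $\mathbf{\Sigma}^0_{\beta+1}(Q_i)$-code $T_i$ of $g_i^{-1}(S)=F^{-1}(S)\cap Q_i$. Since the Borel-code interpretation commutes with intersecting with the subspace $Q_i$, the same $T_i$ read in $\mathcal{X}$ names a set $\widetilde{E}_i\in\mathbf{\Sigma}^0_{\beta+1}(\mathcal{X})$ with $\widetilde{E}_i\cap Q_i=F^{-1}(S)\cap Q_i$; intersecting with the uniformly available $\mathbf{\Sigma}^0_{\beta+1}(\mathcal{X})$-code of $Q_i$ (using $Q_i\in\mathbf{\Delta}^0_{\beta+1}$ and closure of $\mathbf{\Sigma}^0_{\beta+1}$ under finite intersections) yields a $\mathbf{\Sigma}^0_{\beta+1}(\mathcal{X})$-code of $F^{-1}(S)\cap Q_i$, uniformly in $i$. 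Finally $F^{-1}(S)=\bigcup_{i\in\omega}(F^{-1}(S)\cap Q_i)$ is a countable union of $\mathbf{\Sigma}^0_{\beta+1}(\mathcal{X})$ sets with uniformly $X$-computable codes, so a $\mathbf{\Sigma}^0_{\beta+1}(\mathcal{X})$-code of $F^{-1}(S)$ is $X$-computable from the given code of $S$; once fixed $X$-computable well-founded trees of ranks $\alpha+1$ and $\beta+1$ have been chosen at the start, this says exactly that $F\in\cborel{X}{\alpha+1}{\beta+1}(\mathcal{X},\mathcal{Y})$. I expect the main obstacle to be purely administrative --- matching up the representations $\rho_W$ for the various trees $W$ and checking that the coarsening map $\mathbf{\Sigma}^0_{\gamma_i+\alpha+1}\hookrightarrow\mathbf{\Sigma}^0_{\beta+1}$, the passage between a $\mathbf{\Delta}^0_{\beta+1}$ subspace and $\mathcal{X}$, and the countable-union operation are all $X$-computable with respect to them --- which is routine since all ordinals in sight are below $\omega_1^X$; the genuine content is the single inclusion $\cborel{X}{1}{\gamma_i+1}\subseteq\cborel{X}{\alpha+1}{\beta+1}$, i.e., the shift of Remark~\ref{remark1} together with $\gamma_i+\alpha\leq\beta$.
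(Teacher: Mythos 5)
Your proposal is correct and follows essentially the same route as the paper's own proof: decompose $F^{-1}(S)$ as $\bigcup_i\bigl(g_i^{-1}(S)\cap Q_i\bigr)$, use the shift of Remark~\ref{remark1} plus $\gamma_i+\alpha\leq\beta$ (from $\gamma_i<\beta\hat{-}\alpha$) to place each piece in $\Sigma^{0,X}_{\beta+1}$ uniformly, and close under effective finite intersection and countable union. The extra care you take with the ordinal arithmetic $1+\mu=\alpha+1$ and with passing between subspace and ambient codes is sound but not needed beyond what the paper already records.
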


\begin{proof}
Assume that $F:\mathcal{X}\to\mathcal{Y}$ is $\Delta_{\beta+1}^{0,X}$-piecewise $\Sigma^{0,X}_{(\beta\hat{-}\alpha)}$-computable.
Fix an $X$-computable sequence $\{P_e\}_{e\in\omega}$ of $\Delta^{0,X}_{\beta+1}(\mathcal{X})$ sets such that $H_e=F\res P_e$ is $\Sigma^{0,X}_{\gamma(e)+1}(\mathcal{X})$-computable, where $\gamma(e)<\beta\hat{-}\alpha$.
Then, for each $\Sigma^{0,X}_{\alpha+1}(\mathcal{Y})$ set $S\subseteq\mathcal{Y}$, the preimage $F^{-1}(S)$ is the union of $\{H_e^{-1}(S)\cap P_e\}_{e\in\omega}$.
Note that $H_e^{-1}(S)$ is $\Sigma^{0,X}_{\gamma(e)+\alpha+1}$, and the condition $\gamma(e)<\beta\hat{-}\alpha$ implies $\gamma(e)+\alpha\leq\beta$.
Thus, $H_e^{-1}(S)$ is $\Sigma^{0,X}_{\beta+1}$, and its index is computed from any index of $S$ and $e$ by the uniformity.
Thus, $F^{-1}(S)=\bigcup_e(H_e^{-1}(S)\cap P_e)$ is $\Sigma^{0,X}_{\beta+1}$, and we can effectively calculate its index.
Hence, $F$ is a $\cborel{X}{\alpha+1}{\beta+1}$-function.
\end{proof}


\subsection{Shore-Slaman Join Theorem}
The key lemma used to show the inclusion $\borel{\alpha+1}{\beta+1}^\to\subseteq\mathbf{dec}\borel{1}{(\beta\hat{-}\alpha)}$ in Theorem \ref{maintheorem0} is a join theorem concerning the class of $\alpha$-REA operators shown by Shore and Slaman.
We will use the Shore-Slaman join theorem only for the $\alpha$-REA operator $J^{(\alpha)}:x\mapsto x^{(\alpha)}$.

\begin{thm}[Shore-Slaman Join Theorem \cite{ShSl99}]
Let $\alpha$ be a computable ordinal.
The Turing degree structure $(\mathcal{D}_T,\leq,',\oplus)$ satisfies the following formula, for each $k\in\omega$.
\[(\forall\dg{a},\dg{b})(\exists\dg{c}\geq\dg{a})\;[((\forall\beta<\alpha)\;\dg{b}\not\leq\dg{a}^{(\beta)})\;\rightarrow\;(\dg{c}^{(\alpha)}\leq\dg{b}\oplus\dg{a}^{(\alpha)}\leq\dg{b}\oplus\dg{c})].\]
\end{thm}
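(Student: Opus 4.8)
The plan is to prove this by a transfinite finite-extension (forcing) argument, building an actual set $C$ of the required degree $\dg{c}$. Fix representatives $A\in\dg{a}$ and $B\in\dg{b}$; since the paper only ever applies the theorem to the $\alpha$-REA operator $J^{(\alpha)}$, it suffices to produce $C\geq_T A$ with
\[C^{(\alpha)}\leq_T B\oplus A^{(\alpha)}\leq_T B\oplus C.\]
As $A\leq_T C$ forces $A^{(\alpha)}\leq_T C^{(\alpha)}$, unwinding the two displayed inequalities splits the task into three requirements on $C$: (i) $A\leq_T C$, arranged once and for all by taking $C=A\oplus G$ for a generic $G$; (ii) $C^{(\alpha)}$ is uniformly computable from $B\oplus A^{(\alpha)}$, a jump-control (lowness-type) condition relative to $A$; and (iii) $A^{(\alpha)}\leq_T B\oplus C$, a coding condition asserting that $C$, with the oracle $B$ as a helper, recovers $A^{(\alpha)}$. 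When $\alpha=1$ and $A=\emptyset$ this is exactly the Posner--Robinson join theorem, so what I am describing is a relativized, transfinitely iterated form of that construction.

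For (ii) I would build $G$ sufficiently generic over $A$ for an $\alpha$-step iteration of Jockusch--Shore-style pseudo-jump inversion (realized by a forcing with finite conditions of Kumabe--Slaman type), designed so that for every $\gamma\leq\alpha$ the dense requirements controlling the $\gamma$-th jump of $C$ are $A^{(\gamma)}$-recursive, uniformly in $\gamma$. The payoff is a jump-approximation lemma: $A^{(\alpha)}$ can follow the generic filter and decide every $\Sigma^0_\alpha$ fact about $C$, except that at the ``coding locations'' described below the generic is deliberately steered according to $B$, so $A^{(\alpha)}$ needs $B$ as a side oracle to know which branch was taken --- which is exactly why the bound in (ii) is $B\oplus A^{(\alpha)}$ rather than $A^{(\alpha)}$ alone.

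For (iii), and the role of the hypothesis: I would code $A^{(\alpha)}$ into $C$ by reserving, for each $n$, a ``decision node'' of the construction whose two incompatible resolutions are $B$-distinguishable, and committing to whichever resolution records the bit $A^{(\alpha)}(n)$. The assumption $(\forall\beta<\alpha)\;\dg{b}\not\leq\dg{a}^{(\beta)}$ is precisely what guarantees that at each level $\beta<\alpha$ the helper $B$ is ``fresh'' --- not already computed by $A^{(\beta)}$ --- so the level-$\beta$ jump-control module cannot foresee and pre-empt the reserved coding nodes; this is the transfinite analogue of ``$B$ noncomputable'' in Posner--Robinson, and it lets the coding requirements be satisfied on a dense set of conditions without ever clashing with (ii). The main obstacle is the transfinite bookkeeping: one must lay out a priority/iteration scheme over all $\gamma\leq\alpha$ and prove, by induction on $\gamma$, the jump-approximation lemma and the density of the coding requirements simultaneously --- in particular verifying that the forcing relation for $\Sigma^0_\alpha$ facts about $C$ really is $A^{(\alpha)}$-computable from a $B$-generic filter, which amounts to ``unwinding'' the forcing relation through the $\alpha$ successive levels of jump. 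This interlocking verification is the technical heart of the Shore--Slaman argument.
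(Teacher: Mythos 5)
The paper offers no proof of this statement: it is imported verbatim from Shore--Slaman \cite{ShSl99} as a black box (the only in-paper remark is that their argument relativizes to all $\alpha<\omega_1^X$). So there is no internal argument to compare yours against; what can be judged is whether your outline would constitute a proof, and it does not yet. You have correctly identified the shape of the published argument --- build $C=A\oplus G$ by forcing, split the conclusion into a jump-control requirement $C^{(\alpha)}\leq_T B\oplus A^{(\alpha)}$ and a coding requirement $A^{(\alpha)}\leq_T B\oplus C$, and use the hypothesis $(\forall\beta<\alpha)\;\dg{b}\not\leq\dg{a}^{(\beta)}$ to keep $B$ out of the way of the jump-control machinery --- but every step that makes this work is named rather than proved.

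Concretely, three things are missing. First, the forcing itself: in Kumabe--Slaman forcing the generic object is a use-monotone Turing \emph{functional} $\Phi$, a condition is a pair $(\Phi_p,\vec{X}_p)$ with $\vec{X}_p$ a finite set of reals to which no newly added axiom may apply, and one codes by forcing $\Phi(B)=A^{(\alpha)}$ outright; your ``decision nodes with $B$-distinguishable resolutions'' is too vague to support the verification that coding and jump control do not conflict. Second, the density of the coding requirements is exactly where the hypothesis enters: one must show that $B$ never lies in the negative part $\vec{X}_p$ of a condition met by the generic sequence, which holds because those reals are all computable from the various $A^{(\beta)}$ with $\beta<\alpha$ while $B$ is not; you assert the conclusion (``$B$ is fresh'') without giving this argument. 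Third, and hardest, the ``jump-approximation lemma'' --- that the forcing relation for $\Sigma^0_\gamma$ facts about the generic can be decided by $A^{(\gamma)}$-recursive density arguments, uniformly in $\gamma\leq\alpha$ and through limit stages --- is the entire technical content of the theorem, and you explicitly defer it. As it stands the proposal is a correct reading guide to \cite{ShSl99}, not a proof; for the purposes of this paper the honest move is the one the author makes, namely to cite the theorem.
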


For $\alpha=1$, it is exactly the Posner-Robinson join theorem \cite{PosnerR81}.
Historically, Jockusch and Shore \cite{JocSho84} were the first to ask whether the Posner-Robinson join theorem can be generalized to all $n$-REA operators for $n\in\omega$.
The main tool for addressing their question was introduced by Kumabe and Slaman, who showed the join theorem for $\alpha=\omega$ (for Kumabe-Slaman forcing, see also Day-Dzhafarov \cite{DayDzh}).
Finally, Shore and Slaman proved the join theorem for all computable ordinals $\alpha$.
It is noteworthy that by combining it with the Slaman-Woodin double jump definability theorem, they showed that the Turing jump is first-order definable in the partial ordering $(\mathcal{D}_T,\leq)$ of the Turing degrees (see Slaman-Woodin \cite{SlWo06}).

We employ the Shore-Slaman join theorem to show our main theorem.
For Theorem \ref{maintheorem0} with $\alpha=\beta$, we only require the Shore-Slaman join theorem for $\alpha=1$, i.e., the Posner-Robinson join theorem.
To show Theorem \ref{maintheorem} on all levels of Borel hierarchy, we need the Shore-Slaman join theorem for all countable ordinals $\alpha<\omega_1$.
By analyzing the proof of Shore-Slaman \cite{ShSl99}, it is not difficult to see that their theorem can be generalized to all countable ordinals $\alpha<\omega_1^X$, for any $X\in2^\omega$.
Here, $\omega_1^X$ is the least countable ordinal that is not computable in $X$.
The relativized Shore-Slaman join theorem implies the following lemma.

\begin{lem}\label{lem:5:meas-to-nonunif}
Let $X\in\omega^\omega$ be a real, and let $\alpha<\omega_1^X$ be a countable ordinal.
Suppose that $(y\oplus Z)^{(\alpha)}\leq_T(x\oplus Z)^{(\beta)}$ for every $Z\geq_TX$.
Then, there exists $\gamma<\beta\hat{-}\alpha$ such that $y\leq_T(x\oplus X)^{(\gamma)}$.
\end{lem}

\begin{proof}
Suppose for the sake of contradiction that $y\not\leq_T(x\oplus X)^{(\gamma)}$ for all $\gamma<\beta\hat{-}\alpha$.
Then, by the Shore-Slaman join theorem relative to $X$, there exists $Z\geq_Tx\oplus X$ such that $Z^{(\beta\hat{-}\alpha)}\leq_Ty\oplus Z$.
Hence, we have
\[(y\oplus Z)^{(\alpha)}\geq_TZ^{(\beta\hat{-}\alpha+\alpha)}>_TZ^{(\beta)}\geq_T(x\oplus X)^{(\beta)}.\]
However, this is a contradiction.
\end{proof}


\subsection{Turing Degree Analysis}

The condition $F^{-1}:\mathbf{\Sigma}^0_{\alpha+1}\to\mathbf{\Sigma}^0_{\beta+1}$ is equivalent to the condition $F^{-1}:\mathbf{\Sigma}^0_\alpha\to\mathbf{\Delta}^0_{\beta+1}$, since $\Pi^{0,X}_{\alpha}\subseteq\Sigma^{0,X}_{\alpha+1}$, for every $\Sigma^{0,X}_\alpha$ set $A\subseteq\mathcal{Y}$, the preimages $F^{-1}(A)$ and $\complement F^{-1}(A)=F^{-1}(\mathcal{Y}\setminus A)$ are $\Sigma^{0,X}_{\beta+1}$.
This proof is clearly effective.
Thus, we can show that, if $F^{-1}:\mathbf{\Sigma}^0_{\alpha+1}\to\mathbf{\Sigma}^0_{\beta+1}$ is $X$-computable, then both $F^{-1}:\mathbf{\Sigma}^0_{\alpha}\to\mathbf{\Sigma}^0_{\beta+1}$ and $\complement F^{-1}:\mathbf{\Sigma}^0_{\alpha}\to\mathbf{\Sigma}^0_{\beta+1}$ are also $X$-computable, where $\complement A$ is the complement of $A$ in the underlying space.

\begin{lem}\label{lem:Post_real}
Let $X\in 2^\omega$ be a real, and let $\alpha,\beta<\omega_1^X$ be ordinals.
Assume that $F:D\to E$ is a $\Sigma_{\alpha+1,\beta+1}^X$ function, where $D$ and $E$ are subsets of $\omega^\omega$.
If $D$ is $\Sigma^{0,X}_{\beta+1}$, $(F(x)\oplus X)^{(\alpha)}\leq_T(x\oplus X)^{(\beta)}$ holds for any $x\in D$.
\end{lem}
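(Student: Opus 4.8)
The plan is to realize the iterated Turing jump $(F(x)\oplus X)^{(\alpha)}$ as the trace at the point $x$ of a uniformly $\Sigma^{0,X}_{\beta+1}$ family of subsets of $\omega^\omega$, and then to decide membership in that family relative to the oracle $(x\oplus X)^{(\beta)}$.

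First I would fix, using the standard uniformity of the iterated jump along Kleene's $\mathcal{O}^X$ --- available because $\alpha<\omega_1^X$ --- an $X$-computable sequence $\{C_n\}_{n\in\omega}$ of $\Sigma^{0,X}_\alpha$ subsets of $\omega^\omega$ such that
\[
n\in(y\oplus X)^{(\alpha)}\iff y\in C_n\qquad\text{for every }y\in\omega^\omega .
\]
Such a sequence exists because $\{(y,n):n\in(y\oplus X)^{(\alpha)}\}$ is $\Sigma^0_\alpha$ in $y\oplus X$ uniformly in $n$, so with $X$ regarded as a fixed parameter it is presented by an $X$-computable list of $\Sigma^{0,X}_\alpha$ codes. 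Since $\Sigma^{0,X}_\alpha\subseteq\Sigma^{0,X}_{\alpha+1}$ $X$-uniformly, the map $n\mapsto C_n\cap E$ is an $X$-computable map from $\omega$ into $\Sigma^{0,X}_{\alpha+1}(E)$; hence the hypothesis $F^{-1}\Sigma^{0,X}_{\alpha+1}\subseteq^\omega\Sigma^{0,X}_{\beta+1}$, together with Lemma~\ref{prop:5:delta-2-low} taken with $\mathcal{Z}=\omega$, yields that both $n\mapsto F^{-1}(C_n)$ and $n\mapsto D\setminus F^{-1}(C_n)$ are $X$-computable maps from $\omega$ into $\Sigma^{0,X}_{\beta+1}(D)$.

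Next I would push these families out to the ambient Baire space. Every $\Sigma^{0,X}_{\beta+1}$ subset of the subspace $D$ is the intersection with $D$ of some $\Sigma^{0,X}_{\beta+1}$ subset of $\omega^\omega$, and a code for the latter is obtained $X$-uniformly from a code for the former; intersecting that ambient set once more with $D$, and invoking the hypothesis that $D$ is itself $\Sigma^{0,X}_{\beta+1}$ (so that $\Sigma^{0,X}_{\beta+1}$ is closed under the intersection), I get $X$-computable sequences $\{A_n\}_{n\in\omega}$ and $\{B_n\}_{n\in\omega}$ of $\Sigma^{0,X}_{\beta+1}$ subsets of $\omega^\omega$ with $A_n=F^{-1}(C_n)$ and $B_n=D\setminus F^{-1}(C_n)$. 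Then $A_n\cup B_n=D$ and $A_n\cap B_n=\emptyset$, so for each $x\in D$ and each $n$ exactly one of $x\in A_n$, $x\in B_n$ holds.

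Finally, fix $x\in D$. For each $n$, the statements $x\in A_n$ and $x\in B_n$ unwind to $\Sigma^0_{\beta+1}$ conditions on $x\oplus X$, uniformly in $n$, hence each is enumerable relative to $(x\oplus X)^{(\beta)}$ uniformly in $n$; dovetailing the two enumerations, exactly one halts, and it halts on the $A_n$-side precisely when $x\in F^{-1}(C_n)$, i.e. when $F(x)\in C_n$, i.e. when $n\in(F(x)\oplus X)^{(\alpha)}$. This is an $(x\oplus X)^{(\beta)}$-algorithm computing $(F(x)\oplus X)^{(\alpha)}$, so $(F(x)\oplus X)^{(\alpha)}\leq_T(x\oplus X)^{(\beta)}$. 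I expect the only genuinely delicate point to be the descent from level $\alpha+1$ to level $\alpha$ via Lemma~\ref{prop:5:delta-2-low}: this is exactly what upgrades ``enumerable in $(x\oplus X)^{(\beta)}$'' to ``decidable in $(x\oplus X)^{(\beta)}$'', and without it the argument would give only the weaker bound $(x\oplus X)^{(\beta+1)}$; the remaining ingredients --- the $X$-uniformity of the iterated-jump family $\{C_n\}$ and the subspace-to-ambient extension of Borel codes --- are routine.
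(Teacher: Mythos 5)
Your proof is correct and follows essentially the same route as the paper: you build the uniformly $\Sigma^{0,X}_\alpha$ family coding the $\alpha$-th jump (the paper's $S^X_e$), apply Lemma~\ref{prop:5:delta-2-low} to get both $F^{-1}(C_n)$ and its complement in $D$ uniformly $\Sigma^{0,X}_{\beta+1}$, and then decide membership relative to $(x\oplus X)^{(\beta)}$ by dovetailing the two enumerations. The paper phrases this last step via the Sierpi\'nski-valued Dirac measure $\delta_x^{D,\beta+1}$ rather than relativized enumerability, but that is only a difference of presentation.
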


\begin{proof}
Note that $S^X_e=\{z\in E:(z\oplus X)^{(\alpha)}(e)=1\}$ is $\Sigma^0_{\alpha}(E)$.
Moreover, the function $S^X:\omega\to\mathbf{\Sigma}^0_{\alpha}(E)$ sending $e$ to $S^X_e$ is $X$-computable.
To determine whether $(F(x)\oplus X)^{(\alpha)}(e)=1$, we note that this condition is equivalent to $F(x)\in S_e^X$, which is also equivalent to $x\in F^{-1}S^X(e)$.
Then, the condition $x\in F^{-1}S^X(e)$ is $\Delta^{0,X}_{\beta+1}$, since $F^{-1}S^X,\complement F^{-1}S^X:\omega\to\mathbf{\Sigma}^0_{\beta+1}$ are $X$-computable, and by the condition (2) of our Borel coding.
Consequently, we obtain the inequality $(F(x)\oplus X)^{(\alpha)}\leq_T(x\oplus X)^{(\beta)}$ for any $x\in D$.
\end{proof}

\begin{lem}\label{lem:5:relative}
Let $\alpha,\beta<\omega_1^X$ be countable ordinals, and let $D$ be a subset of $\omega^\omega$.
Then, a function $F:D\to\omega^\omega$ is of class ${\rm dec}\cborel{X}{1}{\alpha+1}$ if and only if the following condition holds:
\[F(x)\leq_T(x\oplus X)^{(\alpha)},\mbox{ for any }x\in D.\]
\end{lem}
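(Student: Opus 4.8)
The plan is to prove the two inclusions separately. The direction ``$\Leftarrow$'' is straightforward: given the pointwise jump bound, I would partition $D$ according to which Turing functional computes $F(x)$ from $(x\oplus X)^{(\alpha)}$. The direction ``$\Rightarrow$'' will reduce to the effective Baire-class fact that a $\Sigma^{0,X}_{\alpha+1}$-computable function on $\omega^\omega$ has values uniformly computable from the $\alpha$-th jump of its argument --- essentially the content of Lemma~\ref{lem:Post_real} run with $\alpha\mapsto 0$, $\beta\mapsto\alpha$.

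For ``$\Leftarrow$'', suppose $F(x)\leq_T(x\oplus X)^{(\alpha)}$ for every $x\in D$ (the $\alpha$-th jump is well-defined since $\alpha<\omega_1^X\le\omega_1^{x\oplus X}$, by Spector uniqueness along an $X$-computable notation for $\alpha$). For each $e\in\omega$ I would set
\[Q_e=\{x\in D:\Phi_e((x\oplus X)^{(\alpha)})=F(x)\ \text{and $e$ is least with this property}\},\]
so that $\{Q_e\}_{e\in\omega}$ is a partition of $D$. The map $J:x\mapsto(x\oplus X)^{(\alpha)}$ is $\Sigma^{0,X}_{\alpha+1}$-computable (each of its bits is uniformly a $\Sigma^{0,x\oplus X}_{\alpha}$ condition, so $J$ is in fact $\Delta^{0,X}_{\alpha+1}$-computable), hence $g_e:=\Phi_e\circ J$ is $\Sigma^{0,X}_{\alpha+1}$-computable uniformly in $e$. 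Since $F\res Q_e=g_e\res Q_e$ is a restriction of $g_e$ to a subset of its domain, it too is $\Sigma^{0,X}_{\alpha+1}$-computable uniformly in $e$, so $F\in{\rm dec}\cborel{X}{1}{\alpha+1}$.

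For ``$\Rightarrow$'', fix a partition $\{Q_i\}_{i\in\omega}$ of $D$ with $h_i:=F\res Q_i$ of class $\Sigma^{0,X}_{\alpha+1}$-computable uniformly in $i$. It suffices to show that an arbitrary $\Sigma^{0,X}_{\alpha+1}$-computable $h:\subseteq\omega^\omega\to\omega^\omega$ satisfies $h(x)\leq_T(x\oplus X)^{(\alpha)}$ for each $x\in{\rm dom}(h)$; applied to each $h_i$ this yields $F(x)\leq_T(x\oplus X)^{(\alpha)}$ for $x\in Q_i$, as required. To see the claim I would argue as in the proof of Lemma~\ref{lem:Post_real}: for a fixed length $n$ the preimages $h^{-1}(I_\sigma)$ with $|\sigma|=n$ partition ${\rm dom}(h)$, and since $h^{-1}:\mathbf{\Sigma}^0_1\to\mathbf{\Sigma}^0_{\alpha+1}$ is $X$-computable, both $h^{-1}(I_\sigma)$ and its complement in ${\rm dom}(h)$ --- namely $h^{-1}(\bigcup_{|\tau|=n,\,\tau\neq\sigma}I_\tau)$ --- receive $X$-computable $\mathbf{\Sigma}^0_{\alpha+1}$-codes uniformly in $\sigma$; hence $h^{-1}(I_\sigma)$ is uniformly $\Delta^{0,X}_{\alpha+1}$ relative to ${\rm dom}(h)$. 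Consequently, for $x\in{\rm dom}(h)$ the predicate ``$x\in h^{-1}(I_\sigma)$'' is uniformly decidable from $(x\oplus X)^{(\alpha)}$, and reading off, for each $n$, the unique $\sigma$ of length $n$ with $x\in h^{-1}(I_\sigma)$ computes $h(x)$ from $(x\oplus X)^{(\alpha)}$.

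I expect the only delicate point to be this last step. The passage from $\mathbf{\Sigma}^0_{\alpha+1}$ to $\mathbf{\Delta}^0_{\alpha+1}$ is exactly what delivers the sharp bound $(x\oplus X)^{(\alpha)}$ in place of the weaker $(x\oplus X)^{(\alpha+1)}$ that $\Sigma^0_{\alpha+1}$-measurability alone would give, and it must be justified even though the pieces $Q_i$ furnished by a decomposition need not be Borel. The argument above is pointwise in $x\in Q_i$ --- it only uses that $h^{-1}(I_\sigma)$ is uniformly $\Delta^{0,X}_{\alpha+1}$ as a subset of ${\rm dom}(h)=Q_i$ --- so it never refers to the descriptive complexity of $Q_i$, which is why the domain hypothesis appearing in Lemma~\ref{lem:Post_real} can be dispensed with here.
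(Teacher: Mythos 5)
Your proposal is correct and follows essentially the same route as the paper: the ($\Leftarrow$) direction partitions $D$ by the index $e$ of a functional with $\Phi_e((x\oplus X)^{(\alpha)})=F(x)$ and notes that each $\Phi_e\circ J^{(\alpha)}$ is $\Sigma^{0,X}_{\alpha+1}$-computable, while the ($\Rightarrow$) direction rests on the fact that a $\Sigma^{0,X}_{\alpha+1}$-computable function has values computable from the $\alpha$-th jump of its argument (which the paper dispatches in one line via ``the universality of the Turing jump,'' and which you prove explicitly by deciding the basic-open preimages relative to $(x\oplus X)^{(\alpha)}$). Your added care about the pieces $Q_i$ not being Borel and about making the $Q_e$ a genuine partition is sound but does not change the argument.
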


\begin{proof}
($\Rightarrow$)
Fix a countable cover $\{X_i\}_{i\in\omega}$ of $D$ such that $F\res X_i$ is $\Sigma^{0,X}_{\alpha+1}$-computable for each $i\in\omega$.
By the universality of the Turing jump, there is a sequence of indices $\{e(i)\}_{i\in\omega}$ such that for each $i\in\omega$,
\[F(x)=\Phi_{e(i)}((x\oplus X)^{(\alpha)};x),\mbox{ for any }x\in D.\]

($\Leftarrow$)
Conversely, define $Q_e=\{x\in D:\Phi_e((x\oplus X)^{(\alpha)})=F(x)\}$.
For any $x\in D$, if a function satisfies $F(x)\leq_T(x\oplus X)^{(\alpha)}$, there is an algorithm $e\in\omega$ such that $F(x)=\Phi_e((x\oplus X)^{(\alpha)})$.
Therefore, $\bigcup_eQ_e=D$.
Finally, $F_e=\Phi_e((x\oplus X)^{(\alpha)})$ is $\Sigma^{0,X}_{\alpha+1}$-computable for each $e\in\omega$, and $F\res Q_e=F_e\res Q_e$, for each $e\in\omega$, as desired.
\end{proof}

\begin{cor}\label{cor:dec_num}
Let $\alpha,\beta<\omega_1^X$ be countable ordinals, and let $D$ be a $\Sigma^{0,X}_{\beta+1}$ subset of $\omega^\omega$.
Then, we have the following inclusion.
\[\cborel{X}{\alpha+1}{\beta+1}(D,\omega^\omega)\subseteq{\rm dec}\cborel{X}{1}{(\beta\hat{-}\alpha)}(D,\omega^\omega).\]
\end{cor}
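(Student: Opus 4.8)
The plan is to derive the corollary by chaining the two preceding lemmas together, so that the only genuinely hard input --- the Shore--Slaman diagonalization --- is the one already spent inside Lemma~\ref{lem:5:meas-to-nonunif}. First I would invoke Lemma~\ref{lem:5:meas-to-nonunif}: since $D$ is $\Sigma^{0,X}_{\beta+1}$ and $F$ is a $\cborel{X}{\alpha+1}{\beta+1}$ function, every $x\in D$ admits an ordinal $\gamma<\beta\hat{-}\alpha$ with $F(x)\leq_T(x\oplus X)^{(\gamma)}$. Because $\beta\hat{-}\alpha\leq\beta+1<\omega_1$ is countable, I would then fix an enumeration $\{\gamma_n\}_{n\in\omega}$ of the ordinals below $\beta\hat{-}\alpha$ and slice the domain accordingly, setting
\[
D_n=\{x\in D:F(x)\leq_T(x\oplus X)^{(\gamma_n)}\}\setminus\bigcup_{m<n}D_m .
\]
By the preceding sentence the sets $D_n$ form a countable partition of $D$.

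Next I would exploit the fact that Lemma~\ref{lem:5:relative} places no definability requirement on its domain, so that its ``if'' direction applies to each $D_n$ with the parameter $\gamma_n$ in place of $\alpha$: the bound $F(x)\leq_T(x\oplus X)^{(\gamma_n)}$, valid throughout $D_n$, shows that $F\res D_n$ is of class ${\rm dec}\cborel{X}{1}{\gamma_n+1}$, i.e.\ there is a countable partition $\{Q^n_i\}_{i\in\omega}$ of $D_n$ on each piece of which $F$ is $\Sigma^{0,X}_{\gamma_n+1}$-computable. Amalgamating over $n$, the family $\{Q^n_i\}_{n,i\in\omega}$ is a countable partition of $D$, and on each of its pieces $F$ is $\Sigma^{0,X}_{\gamma_n+1}$-computable with $\gamma_n<\beta\hat{-}\alpha$; hence $F\in{\rm dec}\cborel{X}{1}{(\beta\hat{-}\alpha)}(D,\omega^\omega)$, as required.

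I do not expect a serious obstacle here, since the substantive work has been isolated in Lemma~\ref{lem:5:meas-to-nonunif} and the corollary is really just a repackaging via Lemma~\ref{lem:5:relative}. The one point worth a line of care is that ${\rm dec}\cborel{X}{1}{(\beta\hat{-}\alpha)}$ is a union-type class whose pieces may carry different ranks $\gamma_n+1$: when $\beta\hat{-}\alpha$ is a successor $\delta+1$ one observes that each $\Sigma^{0,X}_{\gamma_n+1}$-computable restriction is a fortiori $\Sigma^{0,X}_{\delta+1}$-computable, and when one wants the countable decomposition to be uniform one uses that $\beta\hat{-}\alpha<\omega_1^X$ carries an $X$-computable ordinal notation, from which $X$-computable notations for all the $\gamma_n$ --- together with the indices produced by Lemma~\ref{lem:5:relative} on each slice --- can be read off uniformly.
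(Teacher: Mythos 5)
Your proposal is correct and follows essentially the same route as the paper: Lemma \ref{lem:5:meas-to-nonunif} supplies the pointwise bound $F(x)\leq_T(x\oplus X)^{(\gamma)}$ with $\gamma<\beta\hat{-}\alpha$, and the $(\Leftarrow)$ direction of Lemma \ref{lem:5:relative} converts this into a countable decomposition. Your explicit slicing of $D$ according to the least witnessing ordinal $\gamma_n$ is a welcome precision that the paper's two-line proof leaves implicit.
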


\begin{proof}
Fix a $\cborel{X}{\alpha+1}{\beta+1}$ function $F:\omega^\omega\to\omega^\omega$.
Clearly, $F\in\cborel{Z}{\alpha+1}{\beta+1}$ for every $Z\geq_TX$.
By Lemma \ref{lem:Post_real}, the function $F$ must satisfy the inequality
\begin{align*}
(F(x)\oplus Z)^{(\alpha)}\leq_T(x\oplus Z)^{(\beta)}
\end{align*}
for any $Z\geq_T X$ and $x\in D$.
By Lemma \ref{lem:5:meas-to-nonunif}, for any $x\in D$, we have $F(x)\leq_T(x\oplus X)^{(\gamma)}$ for some $\gamma<\beta\hat{-}\alpha$.
Thus, by Lemma \ref{lem:5:relative}, $F$ is of class ${\rm dec}\Sigma^{X}_{1,(\beta\hat{-}\alpha)}$.
\end{proof}


\subsection{Complexity of the Decomposition}

In this section, we assume that $\mathcal{X}=\mathcal{Y}=\omega^\omega$.
Kuratowski's extension theorem states that every partial $\mathbf{\Sigma}^0_{\alpha+1}$-measurable function from an metrizable space into a Polish space can be extended to a $\mathbf{\Sigma}^0_{\alpha+1}$-measurable function with a $\mathbf{\Pi}^0_{\alpha+2}$ domain.
Obviously, this theorem is effectivized as follows.

\begin{claim}
Let $\alpha<\omega_1^X$.
For any partial $\Sigma^{0,X}_{\alpha+1}$-computable function $F:\subseteq\mathcal{X}\to\mathcal{Y}$, there is a $\Pi^{0,X}_{\alpha+2}$ set $D$ with ${\rm dom}(F)\subseteq D\subseteq\mathcal{X}$ and a $\Sigma^{0,X}_{\alpha+1}$-computable extension $G:D\to\mathcal{Y}$ of $F$.
\qed
\end{claim}



\begin{claim}
Every partial $\Sigma^{0,X}_{\gamma+1}$-computable function $F:\subseteq\mathcal{X}\to\mathcal{Y}$ has a total multi-valued $X$-computable extension $\tilde{F}:\mathcal{X}\to\mathbf{\Pi}^0_{\gamma+1}(\mathcal{Y})$ in the sense that $\tilde{F}(x)=\{F(x)\}$ for any $x\in{\rm dom}(F_n)$.
\end{claim}

\begin{proof}
Since $\mathcal{Y}$ is Polish, the diagonal set $\Delta_\mathcal{Y}=\{(x,x)\in\mathcal{Y}^2:x\in\mathcal{Y}\}$ is $\Pi^0_1$.
Note that ${\rm graph}(F)=(F,{\rm id})^{-1}(\Delta_\mathcal{Y})$.
Since $F$ is partially $\Sigma^{0,X}_{\gamma+1}$-computable, there is a $\Pi^{0,X}_{\gamma+1}$ set $G\subseteq\mathcal{X}\times\mathcal{Y}$ such that ${\rm graph}(F)=G\cap({\rm dom}(F)\times\mathcal{Y})$.
Then, the function $\tilde{F}:\mathcal{X}\to\mathbf{\Pi}^0_{\gamma+1}(\mathcal{Y})$ sending $x$ to $G^{[x]}$ is $X$-computable (see Brattka \cite[Proposition 3.2]{Bra1}), where $G^{[x]}=\{y\in\mathcal{Y}:(x,y)\in G\}$.
\end{proof}

We now estimate the complexity of our decomposition.

\begin{lem}\label{lem:main_declemma}
Suppose that $2\leq\alpha\leq \beta< \alpha\cdot 2<\omega_1^X$.
Then, we have the following inclusion:
\[\cborel{X}{\alpha+1}{\beta+1}(\mathcal{X},\mathcal{Y})\cap{\rm dec}\cborel{X}{1}{(\beta\hat{-}\alpha)}(\mathcal{X},\mathcal{Y})\subseteq\cdec{X}{\beta+1}\cborel{X}{1}{(\beta\hat{-}\alpha)}(\mathcal{X},\mathcal{Y}).\]
\end{lem}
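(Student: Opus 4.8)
The plan is to replace any witness of the $\mathrm{dec}$-hypothesis by a partition into pieces with $\Delta^{0,X}_{\beta+1}$ domains, in three moves: (i) shrink the domains of the given pieces using the Effective Extension Lemma; (ii) refine the topology of $\mathcal{X}$ so that $F$ and the extended pieces become continuous; and (iii) extract the desired $\Delta^{0,X}_{\beta+1}$ partition by a Baire-category argument in the refined topology, with the descriptive-set-theoretic bookkeeping kept inside $\Sigma^{0,X}_{\beta+1}$ by exploiting $\beta<\alpha\cdot 2$.

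First I fix a countable partition $\{X_i\}_{i\in\omega}$ of $\mathcal{X}$ such that $F\res X_i$ is $\Sigma^{0,X}_{\gamma(i)+1}$-computable uniformly in $i$, with $\gamma(i)<\beta\hat{-}\alpha$. Applying the Effective Extension Lemma (Lemma~\ref{lem:ext_lem}) to each $F\res X_i$ yields, uniformly, a $\Sigma^{0,X}_{\gamma(i)+1}$-computable map $G_i\colon D_i\to\mathcal{Y}$ with $X_i\subseteq D_i$ and $D_i\in\Pi^{0,X}_{\gamma(i)+2}$. Here the hypothesis $2\leq\alpha\leq\beta<\alpha\cdot 2$ enters: it gives $\beta\hat{-}\alpha\leq\alpha\leq\beta$, and a short case distinction shows $\gamma(i)+2\leq\beta$ always --- if $\beta\hat{-}\alpha$ is a limit then $\gamma(i)+2<\beta\hat{-}\alpha\leq\beta$, while if $\beta\hat{-}\alpha$ is a successor then $\beta\hat{-}\alpha\neq\beta$, since $\alpha\hat{-}\alpha=\alpha$ forces $\alpha$ to be additively indecomposable, hence (as $\alpha\geq 2$) a limit ordinal. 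Consequently every $D_i$ lies in $\Pi^{0,X}_{\leq\beta}\subseteq\Delta^{0,X}_{\beta+1}$, uniformly. Also, as $F$ is $\cborel{X}{\alpha+1}{\beta+1}$ and $\alpha\geq 2$, Lemma~\ref{prop:5:delta-2-low} gives $F^{-1}(\Sigma^{0,X}_{\alpha})\subseteq\Delta^{0,X}_{\beta+1}$; in particular $F$ is $\Sigma^{0,X}_{\beta+1}$-measurable and $F^{-1}$ of (cl)open subsets of $\mathcal{Y}$ are $\Delta^{0,X}_{\beta+1}$.

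Next I pass to the topology $\tau$ on $\mathcal{X}$ generated by the original open sets together with the countably many $\Delta^{0,X}_{\beta+1}$ sets $F^{-1}(V)$, $G_i^{-1}(V)$ and $D_i$ ($i\in\omega$, $V$ in a fixed countable basis of $\mathcal{Y}$). By a standard change-of-topology argument $(\mathcal{X},\tau)$ is second countable and quasi-Polish, its $\tau$-open sets are $\Sigma^{0,X}_{\beta+1}$, and $F$ as well as every $G_i$ is $\tau$-continuous with $\tau$-open domain. The agreement sets $E_i=\{x\in D_i:G_i(x)=F(x)\}$ then cover $\mathcal{X}$ (since $X_i\subseteq E_i$), each $E_i$ is $\tau$-closed in the $\tau$-open set $D_i$, and $F\res E_i=G_i\res E_i$ is $\Sigma^{0,X}_{\gamma(i)+1}$-computable. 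Running the Baire category theorem in $(\mathcal{X},\tau)$ --- some $E_i$ is nonmeager, hence, being $\tau$-locally closed, contains a nonempty $\tau$-open set --- and iterating over the shrinking $\tau$-closed residuals $Y_{\xi+1}=Y_\xi\setminus W_\xi$ produces a partition $\{W_\xi\}_{\xi<\xi^\ast}$ of $\mathcal{X}$, with $\xi^\ast<\omega_1^X$ by second countability, where each $W_\xi$ is $\tau$-relatively-open in its residual, is contained in some $E_{i(\xi)}$, and hence has $F\res W_\xi$ being $\Sigma^{0,X}_{\gamma(i(\xi))+1}$-computable with $\gamma(i(\xi))<\beta\hat{-}\alpha$.

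The main obstacle is the final bookkeeping: a $\tau$-relatively-open subset of a $\tau$-closed residual is, as a subset of $\mathcal{X}$, a priori only $\Delta^{0,X}_{\beta+2}$, whereas we need the pieces to be $\Delta^{0,X}_{\beta+1}$. One must therefore use the $\Pi^{0,X}_{\leq\beta}$-bound on the $D_i$ (and, again, $\beta<\alpha\cdot 2$) to keep all the relevant Boolean operations inside $\Sigma^{0,X}_{\beta+1}$, so that --- after reindexing and, if needed, applying the (effective) reduction property of $\Sigma^{0,X}_{\beta+1}$ --- the $W_\xi$ can be taken $\Sigma^{0,X}_{\beta+1}$; since a countable partition into $\Sigma^{0,X}_{\beta+1}$ sets automatically consists of $\Delta^{0,X}_{\beta+1}$ sets (each piece being the complement of the $\Sigma^{0,X}_{\beta+1}$ union of the others), this yields $F\in\cdec{X}{\beta+1}\cborel{X}{1}{(\beta\hat{-}\alpha)}(\mathcal{X},\mathcal{Y})$. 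A secondary point requiring care is making the Baire-category extraction effective and uniform in $X$, carried out along Kleene's system $\mathcal{O}^X$.
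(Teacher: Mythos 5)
Your first step (extending each piece to a $\Sigma^{0,X}_{\gamma(i)+1}$-computable map on a $\Pi^{0,X}_{\gamma(i)+2}\subseteq\Pi^{0,X}_{\beta}$ domain via Lemma~\ref{lem:ext_lem}) matches the paper. From there, however, you switch to a change-of-topology/Baire-category route, and the step you yourself flag as ``the main obstacle'' is a genuine gap rather than bookkeeping: it is precisely the content of the lemma, and your argument does not close it. Concretely, your agreement sets $E_i=\{x\in D_i:G_i(x)=F(x)\}$ arise as preimages of the closed diagonal under the pair map $(G_i,F)$, so in the original space they are only $\Pi^{0,X}_{\beta+1}$; the transfinite extraction of $\tau$-open pieces of $\tau$-closed residuals then lands in $\Delta^{0,X}_{\beta+2}$, one level too high. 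You assert that the $\Pi^{0,X}_{\leq\beta}$ bound on the $D_i$ together with $\beta<\alpha\cdot 2$ lets one ``keep all the relevant Boolean operations inside $\Sigma^{0,X}_{\beta+1}$'' and then apply reduction, but you give no mechanism: reduction for $\Sigma^{0,X}_{\beta+1}$ cannot be applied until the sets in question are shown to be $\Sigma^{0,X}_{\beta+1}$ in the first place, and the complexity bound on the domains does nothing to lower the complexity of the equality condition $G_i(x)=F(x)$.

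The missing idea is to show directly that the agreement set is $\Sigma^{0,X}_{\beta+1}$ (not merely $\Pi^{0,X}_{\beta+1}$) by using the full $\cborel{X}{\alpha+1}{\beta+1}$ hypothesis on preimages of $\Sigma^{0,X}_{\alpha+1}$ sets, rather than only the consequence that $F$ is $\Sigma^{0,X}_{\beta+1}$-measurable. Write $\mathrm{graph}(G_i)=P_i\cap(D_i\times\mathcal{Y})$ with $P_i\in\Pi^{0,X}_{\gamma(i)+1}$; since $\beta<\alpha\cdot 2$ gives $\beta\hat{-}\alpha\leq\alpha$, we get $P_i\in\Sigma^{0,X}_{\alpha+1}$, so the vertical sections $P_i^{[x]}$ (which equal the singletons $\{G_i(x)\}$ for $x\in D_i$) are uniformly computable points of $\Sigma^{0,X}_{\alpha+1}(\mathcal{Y})$. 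By Lemma~\ref{lem:eff_to_mark}, $F^{-1}(P_i^{[x]})$ is then $\Sigma^{0,X}_{\beta+1}$ uniformly in $x$ and $i$, and $E_i=D_i\cap\{x:x\in F^{-1}(P_i^{[x]})\}$ is $\Sigma^{0,X}_{\beta+1}$. Splitting each $E_i$ into countably many $\Pi^{0,X}_{\beta}\subseteq\Delta^{0,X}_{\beta+1}$ pieces and disjointifying finishes the proof, with no change of topology and no Baire-category argument at all. (A minor remark: your case distinction for $\gamma(i)+2\leq\beta$ is correct but roundabout; $\alpha\geq 2$ gives $\gamma(i)<\beta\hat{-}\alpha\leq\beta\hat{-}2$ immediately.)
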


\begin{proof}
Assume that $F$ is decomposable into a uniform sequence $\{F_n\}_{n\in\omega}$ of $\Sigma^0_{(\beta\hat{-}\alpha)}$-measurable functions.
It suffices to estimate the complexity of $Q_n=\{x\in{\rm dom}(F_n):F(x)=F_n(x)\}$ in $\mathcal{X}$.

Note that $\alpha\cdot 2=\alpha+\alpha>\beta$ implies that $\beta\hat{-}\alpha\leq\alpha$.
Therefore, $\Pi^{0,X}_{\gamma+1}\subseteq\Sigma^{0,X}_{\alpha+1}$ for any $\gamma<\beta\hat{-}\alpha$.
This implies that the total multi-valued extension $\tilde{F}_n:\mathcal{X}\to\mathbf{\Sigma}^0_{\alpha+1}(\mathcal{Y})$ in the sense of the previous claim is $X$-computable.
Recall that the membership relation $\in_{\beta+1}:\mathcal{X}\times\mathbf{\Sigma}^{0}_{\beta+1}(\mathcal{X})\to\mathbb{S}$ is $\Sigma^{0,X}_{\beta+1}$-computable.
Therefore,
\[K_n=(\in_{\beta+1}\circ({\rm id},F^{-1}\circ\tilde{F_n}))^{-1}(\{1\})=\{(z,x)\in\mathcal{X}^2:F(z)\in\tilde{F_n}(x)\}.\]
is $\Sigma^{0,X}_{\beta+1}$, since $F\in\cborel{X}{\alpha+1}{\beta+1}$ implies that $\in_{\beta+1}\circ({\rm id},F^{-1}\tilde{F_n}):\mathcal{X}^2\to\mathbb{S}$ is $\Sigma^{0,X}_{\beta+1}$-computable, and $\{1\}$ is $\Sigma^0_1$ in $\mathbb{S}$.
Consequently, $Q_n$ can be represented as follows:
\[Q_n={\rm dom}(F_n)\cap({\rm id},{\rm id})^{-1}(K_n\cap\Delta_\mathcal{X})\]

By the first claim, we may assume that ${\rm dom}(F_n)$ is $\Pi^{0,X}_{\gamma+2}$.
Then, $\Pi^{0,X}_{\gamma+2}\subseteq\Pi^{0,X}_{\beta}$ holds since $\alpha\geq 2$ implies that $\gamma<\beta\hat{-}2$.
Consequently, this set is $\Sigma^{0,X}_{\beta+1}$ uniformly in $n\in\omega$.
Let $\{Q_{n,m}\}_{m\in\omega}$ be a uniform sequence of $\Pi^{0,X}_{\beta}$ sets with $Q_n=\bigcup_{m\in\omega}Q_{n,m}$.
Then, $F\res Q_{n}=F_n\res Q_{n,m}$ for each $n,m\in\omega$.
%
\end{proof}

As a consequence, we obtain Theorems \ref{maintheorem0} and \ref{maintheorem} for $\mathcal{X}=\mathcal{Y}=\omega^\omega$, by relativizing Corollary \ref{cor:dec_num} and Lemma \ref{lem:main_declemma} via Lemmas \ref{lem:relatiI} and \ref{prop:piececonlib}.


\subsection{Topological Dimension and Quasi-Polish Spaces}\label{sec:qpolish}

In this section, we discuss the possibility of proving our main theorem for a wider class of topological spaces.
This is an important task because it seems that the original motivations behind pioneering works on first-level Borel isomorphisms (i.e., $\mathbf{\Sigma}_{2,2}$-isomorphisms) by Jayne \cite{Jay74} and Jayne and Rogers \cite{JR} and others were to classify topological spaces.
To show our main theorem for a wider class rather than $\omega^\omega$, we focus on the Borel structure of a given space.

We call a bijection $h:\omega^\omega\to\mathcal{X}$ {\em a Borel isomorphism at the level $3/2$} (for short, a $3/2$-isomorphism) if $h$ is $\mathbf{\Delta}^0_2$-piecewise continuous and $h^{-1}$ is $\mathbf{\Delta}^0_3$-piecewise continuous.
Note that an uncountable (quasi-)Polish space having transfinite small inductive dimension (see Engelking \cite{EngBook}) is $3/2$-isomorphic to $\omega^\omega$ (see also \cite[Theorem 4.21]{MRSchSel13}).

For instance, if a space $\mathcal{X}$ is the Euclidean $n$-space $\mathbb{R}^n$ or the unit $n$-sphere $S^n$ (as a recursively presented Polish space), it is computably $3/2$-isomorphic to Baire space $\omega^\omega$, where a bijection $h:\omega^\omega\to\mathcal{X}$ is called {\em an $X$-computable $3/2$-isomorphism} for some oracle $X\in 2^\omega$ if $h\in\cdec{X}{2}\cborel{X}{1}{1}$ and $h^{-1}\in\cdec{X}{3}\cborel{X}{1}{1}$.
This is because the boundary sphere $\partial B(q;r)$ of each rational open ball is $\Pi^0_1$ uniformly in its center $q$ with ratio $r$, and the $\Pi^0_2$ set $\mathcal{X}\setminus\bigcup_{q,r}\partial B(q;r)$ is computably homeomorphic to a $\Pi^0_1$ subspace of $\omega^\omega$ (see also \cite[Theorems 4.7 and 4.21]{MRSchSel13}).

\begin{cor}\label{maintheorem2}
Let $X\in 2^\omega$ be a real.
Let $\mathcal{X}$ and $\mathcal{Y}$ be represented spaces that are $X$-computably $3/2$-isomorphic to $\omega^\omega$.
For any ordinals $\alpha,\beta<\omega_1^X$ with $\alpha\leq\beta<\alpha\cdot 2$, we have the following equality.
\[\cborel{X}{\alpha+1}{\beta+1}(\mathcal{X},\mathcal{Y})=\cdec{X}{\beta+1}\cborel{X}{1}{(\beta\hat{-}\alpha)}(\mathcal{X},\mathcal{Y}).\]
\end{cor}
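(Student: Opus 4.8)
\emph{Plan of proof.} The inclusion $\supseteq$ is precisely Lemma~\ref{lem:mottoros} (with $\alpha\leq\beta$), so all the work lies in $\subseteq$. I would first dispose of the degenerate case $\alpha=1$: then $\beta=1$, $\beta\hat{-}\alpha=1$, and $\cborel{X}{2}{2}\subseteq\cdec{X}{2}\cborel{X}{1}{1}$ is just the relativization of the Pauly--de Brecht equality $\cborel{}{2}{2}=\cdec{}{2}\cborel{}{1}{1}$. So assume $\alpha\geq 2$, hence also $\beta\geq 2$. The plan is to conjugate $F$ by the $X$-computable $3/2$-isomorphisms of $\mathcal{X}$ and $\mathcal{Y}$ with $\omega^\omega$, reducing to $\mathcal{X}=\mathcal{Y}=\omega^\omega$, which is Polish and on which Corollary~\ref{cor:dec_num} and Lemma~\ref{lem:main_declemma} apply directly.

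Concretely, I would fix $X$-computable $3/2$-isomorphisms $h_\mathcal{X}\colon\omega^\omega\to\mathcal{X}$ and $h_\mathcal{Y}\colon\omega^\omega\to\mathcal{Y}$, so that $h_\mathcal{X},h_\mathcal{Y}\in\cdec{X}{2}\cborel{X}{1}{1}$ and $h_\mathcal{X}^{-1},h_\mathcal{Y}^{-1}\in\cdec{X}{3}\cborel{X}{1}{1}$. Since $\alpha,\beta\geq 2$, Lemma~\ref{lem:mottoros} (with its first two indices both set to $\beta$, resp.\ both to $\alpha$, using $\beta\hat{-}\beta=\alpha\hat{-}\alpha=1$) gives $h_\mathcal{X}\in\cborel{X}{\beta+1}{\beta+1}$ and $h_\mathcal{Y}^{-1}\in\cborel{X}{\alpha+1}{\alpha+1}$. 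As membership in the $\cborel{X}{\cdot}{\cdot}$-classes is closed under composition along matching indices (preimage operators compose, and a composition of $X$-computable maps is $X$-computable), the conjugate $\widetilde F:=h_\mathcal{Y}^{-1}\circ F\circ h_\mathcal{X}\colon\omega^\omega\to\omega^\omega$ lies in $\cborel{X}{\alpha+1}{\beta+1}(\omega^\omega,\omega^\omega)$. On $\omega^\omega$ I would then apply Corollary~\ref{cor:dec_num} with $D=\omega^\omega$ (a $\Sigma^{0,X}_{\beta+1}$ set) to get $\widetilde F\in{\rm dec}\cborel{X}{1}{(\beta\hat{-}\alpha)}(\omega^\omega,\omega^\omega)$, and then Lemma~\ref{lem:main_declemma}---whose hypotheses ``$\mathcal{X},\mathcal{Y}$ Polish'' and $2\leq\alpha\leq\beta<\alpha\cdot 2$ are exactly met---to obtain a witnessing partition $\{P_n\}_{n\in\omega}$ of $\omega^\omega$ with $P_n\in\Delta^{0,X}_{\beta+1}$ and $\widetilde F\res P_n$ being $\Sigma^{0,X}_{\gamma(n)+1}$-computable with $\gamma(n)<\beta\hat{-}\alpha$, uniformly in $n$.

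It then remains to push the decomposition back through $F=h_\mathcal{Y}\circ\widetilde F\circ h_\mathcal{X}^{-1}$, i.e.\ to verify that the class $\cdec{X}{\beta+1}\cborel{X}{1}{(\beta\hat{-}\alpha)}$ is stable under precomposition with $h_\mathcal{X}^{-1}$ and postcomposition with $h_\mathcal{Y}$. I would take the $\Delta^{0,X}_3$ partition $\{A_k\}_k$ of $\mathcal{X}$ witnessing $h_\mathcal{X}^{-1}\in\cdec{X}{3}\cborel{X}{1}{1}$ and the $\Delta^{0,X}_2$ partition $\{B_j\}_j$ of $\omega^\omega$ witnessing $h_\mathcal{Y}\in\cdec{X}{2}\cborel{X}{1}{1}$, and form the common refinement of $\mathcal{X}$ into the pieces $C_{k,n,j}=A_k\cap(h_\mathcal{X}^{-1})^{-1}(P_n)\cap(h_\mathcal{X}^{-1})^{-1}(\widetilde F^{-1}(B_j))$. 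On each $C_{k,n,j}$ the restriction of $F$ equals $(h_\mathcal{Y}\res B_j)\circ(\widetilde F\res P_n)\circ(h_\mathcal{X}^{-1}\res A_k)$, a composition of a $\Sigma^{0,X}_{\gamma(n)+1}$-computable map with two $X$-computable continuous maps, hence is $\Sigma^{0,X}_{\gamma(n)+1}$-computable with $\gamma(n)<\beta\hat{-}\alpha$; and $C_{k,n,j}\in\Delta^{0,X}_{\beta+1}$ uniformly, because $A_k\in\Delta^{0,X}_3\subseteq\Delta^{0,X}_{\beta+1}$ (as $\beta\geq 2$), continuous preimages of $\Delta^{0,X}_{\beta+1}$ sets remain $\Delta^{0,X}_{\beta+1}$, and $\widetilde F^{-1}(B_j)\in\Delta^{0,X}_{\beta+1}$ since $\widetilde F\in\cborel{X}{\alpha+1}{\beta+1}$ with $\alpha\geq 2$ (so $\mathbf{\Pi}^0_2\subseteq\mathbf{\Sigma}^0_{\alpha+1}$; cf.\ Lemma~\ref{prop:5:delta-2-low}). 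Thus $F\in\cdec{X}{\beta+1}\cborel{X}{1}{(\beta\hat{-}\alpha)}(\mathcal{X},\mathcal{Y})$, completing the proof.

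The main obstacle, as is typical for such decomposition arguments, is the complexity bookkeeping in this final step---keeping every refined domain $C_{k,n,j}$ inside $\mathbf{\Delta}^{0,X}_{\beta+1}$. This is exactly the point at which $\beta<\alpha\cdot 2$ is indispensable; it is already built into Lemma~\ref{lem:main_declemma} via $\beta\hat{-}\alpha\leq\alpha$, which forces the auxiliary Borel sets (graphs of the pieces) to stay below level $\alpha+1\leq\beta+1$, and without it the preimages of $\mathbf{\Delta}^0_2$ data under the $\mathbf{\Sigma}^0_{\gamma(n)+1}$-measurable restrictions could climb past level $\beta+1$. A smaller, purely cosmetic wrinkle is the case $\alpha=1$: there the conjugation reduction fails because $h_\mathcal{Y}^{-1}$ is only $\mathbf{\Delta}^0_3$-piecewise continuous, so $\widetilde F$ need not be $\cborel{X}{2}{2}$, and one appeals to the Pauly--de Brecht theorem instead.
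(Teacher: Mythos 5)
Your proposal is correct and follows essentially the same route as the paper: dispose of $\alpha=1$ via the Pauly--de Brecht computable Jayne--Rogers theorem, conjugate by the $X$-computable $3/2$-isomorphisms to reduce to $\omega^\omega$, apply Corollary~\ref{cor:dec_num} and then Lemma~\ref{lem:main_declemma}, and transport the decomposition back. Your final paragraph merely spells out the refinement-of-partitions bookkeeping that the paper compresses into one sentence, and your complexity estimates there are accurate.
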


\begin{proof}
Assume that $F$ is $\cborel{X}{\alpha+1}{\beta+1}$.
For $\alpha=0$, it is obvious.
If $\alpha=1$, $\alpha\leq \beta<\alpha\cdot 2$ implies $\beta=1$.
Then, it is the computable version of the Jayne-Rogers theorem proved by Pauly-de Brecht \cite{PBreta}.
Thus, we can assume that $\alpha\geq 2$.

Let $h_\mathcal{X}:\omega^\omega\to\mathcal{X}$ and $h_\mathcal{Y}:\omega^\omega\to\mathcal{Y}$ be $X$-computable $3/2$-isomorphisms.
By Lemma \ref{lem:mottoros}, we have $h_\mathcal{X},h_\mathcal{Y}\in\cdec{X}{\alpha+1}\cborel{X}{1}{1}\subseteq\cborel{X}{\alpha+1}{\alpha+1}$ and $h^{-1}_\mathcal{X},h^{-1}_\mathcal{Y}\in\cdec{X}{\beta+1}\cborel{X}{1}{1}\subseteq\cborel{X}{\beta+1}{\beta+1}$.
Assume that $F:\mathcal{X}\to\mathcal{Y}$ is a $\cborel{X}{\alpha+1}{\beta+1}$ function.
It is not hard to see that the function $h_\mathcal{Y}Fh_\mathcal{X}^{-1}:\omega^\omega\to\omega^\omega$ is $\cborel{X}{\alpha+1}{\beta+1}$, since $h_\mathcal{X}^{-1}\in\cborel{X}{\beta+1}{\beta+1}$ and $h_\mathcal{Y}\in\cborel{X}{\alpha+1}{\alpha+1}$.
By Corollary \ref{cor:dec_num}, we can see that $h_\mathcal{Y}Fh_\mathcal{X}^{-1}$ is contained in the class $\cdec{X}{}\cborel{X}{1}{(\beta\hat{-}\alpha)}$.
Then, by Lemma \ref{lem:main_declemma}, we have $h_\mathcal{Y}Fh_\mathcal{X}^{-1}\in\cdec{X}{\beta+1}\cborel{0,X}{1}{(\beta\hat{-}\alpha)}$.
Consequently, $F=h_\mathcal{Y}h_\mathcal{Y}^{-1}Fh_\mathcal{X}h_\mathcal{X}^{-1}\in\cdec{X}{\beta+1}\cborel{X}{1}{(\beta\hat{-}\alpha)}$ holds since $h_\mathcal{Y},h^{-1}_\mathcal{X}\in\cdec{X}{\beta+1}\cborel{X}{1}{1}$.
Conversely, by Lemma \ref{lem:mottoros}, such $F$ is $\cborel{X}{\alpha+1}{\beta+1}$.
\end{proof}

In general, all two uncountable (quasi-)Polish spaces are $\mathbf{\Sigma}^0_3$-measurably isomorphic (\cite[Proposition 4.3]{MRSchSel13}), that is, there is a bijection $h$ between two uncountable (quasi-)Polish spaces such that both $h$ and $h^{-1}$ are $\mathbf{\Sigma}^0_3$-measurable.

\begin{cor}\label{maintheorem2_pp}
Let $X\in 2^\omega$ be a real.
Let $\mathcal{X}$ and $\mathcal{Y}$ be represented spaces that are $\Sigma^{0,X}_n$-computably isomorphic to $\omega^\omega$ for some $n\in\omega$.
For any ordinals $\alpha,\beta<\omega_1^X$ with $\omega\leq\alpha\leq\beta<\alpha\cdot 2$, we have the following equality.
\[\cborel{X}{\alpha+1}{\beta+1}(\mathcal{X},\mathcal{Y})=\cdec{X}{\beta+1}\cborel{X}{1}{(\beta\hat{-}\alpha)}(\mathcal{X},\mathcal{Y}).\]
\end{cor}

\begin{proof}
Let $h_\mathcal{X}:\omega^\omega\to\mathcal{X}$ and $h_\mathcal{Y}:\omega^\omega\to\mathcal{Y}$ be $\Sigma^{0,X}_n$-computable isomorphisms.
By Lemma \ref{lem:mottoros}, we have $h_\mathcal{X},h_\mathcal{X}^{-1},h_\mathcal{Y},h_\mathcal{Y}^{-1}\in\cborel{X}{1}{n}\subseteq\cborel{X}{\omega}{\omega}\subseteq\cborel{X}{\alpha+1}{\alpha+1}$.
Assume that $F:\mathcal{X}\to\mathcal{Y}$ is a $\cborel{X}{\alpha+1}{\beta+1}$ function.
As in the proof of Corollary \ref{maintheorem2}, we can see that $G=h_\mathcal{Y}Fh_\mathcal{X}^{-1}$ is contained in the class $\cdec{X}{\beta+1}\cborel{X}{1}{(\beta\hat{-}\alpha)}$.
If $\alpha\geq\omega$, we now claim that $\beta\hat{-}\alpha$ is a limit ordinal.
If not, there is an ordinal $\gamma$ such that $\beta\hat{-}\alpha=\gamma+1$.
By definition, $\gamma+\alpha\leq\beta$ and note that $1+\alpha=\alpha$
whenever $\alpha\geq\omega$.
Therefore, we have $\beta<\beta\hat{-}\alpha+\alpha=\gamma+1+\alpha=\gamma+\alpha\leq\beta$, a contradiction.

Now, we have a $\Delta^{0,X}_{\beta+1}$ partition $\{P_n\}_{n\in\omega}$ such that each $G\res P_n$ is $\Sigma^{0,X}_{\gamma+1}$-computable for some $\gamma<\beta\hat{-}\alpha$.
Then, it is easy to see that $h_\mathcal{Y}Gh_\mathcal{X}^{-1}\res h_\mathcal{X}(P_n)$ is $\Sigma^{0,X}_{\gamma+2n}$-computable.
Note that $\gamma+2n<\beta\hat{-}\alpha$ holds since $\beta\hat{-}\alpha$ is a limit ordinal.
Consequently, $F=h_\mathcal{Y}Gh_\mathcal{X}^{-1}\in\cdec{X}{\beta+1}\cborel{X}{1}{(\beta\hat{-}\alpha)}$ holds since $\{h_\mathcal{X}(P_n)\}_{n\in\omega}$ is a $\Delta^{0,X}_{\beta+1}$ partition of $\mathcal{X}$.
\end{proof}

As a consequence, we obtain Theorem \ref{maintheorem}, by relativizing Corollaries \ref{maintheorem2} and \ref{maintheorem2_pp} via Lemmas \ref{lem:relatiI} and \ref{prop:piececonlib}.
Indeed, Theorems \ref{maintheorem0} and \ref{maintheorem} also hold for quasi-Polish spaces having transfinite small inductive dimensions (see also de Brecht \cite{Brecht13} for quasi-Polish spaces and the modified Borel hierarchy).

\subsection{Open Questions}

The concept of the $\mathbf{\Sigma}_{\alpha,\beta}$-functions was applied by Jayne \cite{Jay74} to study the Banach space $\mathcal{B}^*_\alpha(X)$ of bounded real-valued Baire functions of class $\alpha$ on a realcompact space $\mathcal{X}$.
Jayne \cite[Theorem 2]{Jay74} showed that for any realcompact spaces $\mathcal{X},\mathcal{Y}$ and  ordinals $\alpha,\beta\geq 1$, $\mathcal{B}_\beta^*(\mathcal{X})$ is linearly isometric to $\mathcal{B}_\alpha^*(\mathcal{Y})$ if and only if there exists a $\borel{\alpha+1}{\beta+1}$-isomorphism of $\mathcal{X}$ onto $\mathcal{Y}$.
Here, a bijection $f:\mathcal{X}\to\mathcal{Y}$ is said to be a $\borel{\alpha+1}{\beta+1}$-isomorphism if $f$ is $\borel{\alpha+1}{\beta+1}$ and its inverse function $f^{-1}$ is $\borel{\beta+1}{\alpha+1}$.
It is natural to ask whether the same result holds for $\borel{\alpha+1}{\beta+1}^\to$-isomorphisms.
The problem is how we refine the result by Jayne \cite[Theorem 1]{Jay74} into the following form.

\begin{prob}
Is every Boolean algebra isomorphism of $\mathbf{\Delta}^0_{\beta+1}(\mathcal{X})$ onto $\mathbf{\Delta}^0_{\alpha+1}(\mathcal{Y})$ induced by a $\borel{\alpha+1}{\beta+1}^\to$-isomorphism of $\mathcal{X}$ onto $\mathcal{Y}$?
\end{prob}

More generally, it is also important to ask whether the classes $\borel{\alpha}{\beta}$ and $\borel{\alpha}{\beta}^\to$ coincide.

\begin{prob}\label{prob:conttrans}
Does the equality $\borel{\alpha}{\beta}(\omega^\omega,\omega^\omega)=\borel{\alpha}{\beta}^\to(\omega^\omega,\omega^\omega)$ hold for all countable ordinals $\alpha,\beta<\omega_1$?
\end{prob}

It should also be asked whether Theorem \ref{maintheorem} can be generalized to all countable ordinals $\alpha,\beta<\omega_1$.
Indeed, Pawlikowski-Sabok \cite[Question 7.3]{PawSab} proposed the problem to find an analogue of the Jayne-Rogers theorem at transfinite levels of Borel functions.
We conclude the paper with a proposal on the precise form of the decomposability problem at transfinite levels of the hierarchy of Borel functions.

\begin{prob}
Let $\mathcal{X}$ and $\mathcal{Y}$ be separable metrizable spaces with $\mathcal{X}$ analytic.
For any countable ordinals $\alpha\leq\beta<\omega_1$, is the following equality true?
\[\borel{\alpha+1}{\beta+1}(\mathcal{X},\mathcal{Y})=\dec{\beta+1}\borel{1}{(\beta\hat{-}\alpha)}(\mathcal{X},\mathcal{Y}).\]
\end{prob}

Recently, Gregoriades and Kihara \cite{GKta} succeeded in removing the continuity assumption from Theorem \ref{maintheorem0}, that is, they showed
\[\dec{\beta+1}\borel{1}{(\beta\hat{-}\alpha)}(\mathcal{X},\mathcal{Y})\subseteq\borel{\alpha+1}{\beta+1}(\mathcal{X},\mathcal{Y})\subseteq\mathbf{dec}\borel{1}{(\beta\hat{-}\alpha)}(\mathcal{X},\mathcal{Y}).\]
in the same cases as the current paper by a slight extension of the current idea.


\subsection*{Acknowledgements}
The author is partially supported by a Grant-in-Aid for JSPS fellows.
The author would like to thank Luca Motto Ros for his discourse on the concept of piecewise definable functions at the Dagstuhl Seminar 11411 entitled ``Computing with Infinite Data: Topological and Logical Foundations''.
The author is also grateful to Matthew de Brecht, Masahiro Kumabe, Andrew Marks, and Arno Pauly for their insightful comments and discussions.
Finally, the author would like to thank the anonymous referees for their valuable comments and suggestions.

\end{document}